\newcommand{\bH}{\mathbb{H}}
\newcommand{\bG}{\mathbb{G}}
\newcommand{\BR}{\textit{BR}}
\DeclareMathOperator{\ba}{\backslash}
\newcommand{\con}{/}
\newcommand{\R}{T}
\newcommand{\bT}{\mathbb{T}}
\begin{document}

\title*{Constructing a Tutte polynomial for graphs embedded in surfaces}

\author{Iain Moffatt}

\institute{Iain Moffatt \at Department of Mathematics, Royal Holloway, University of London, Egham, TW20 0EX, United~Kingdom. \email{iain.moffatt@rhul.ac.uk}
}

\maketitle

\abstract{
There are several different extensions of the Tutte polynomial to graphs embedded in surfaces. To help frame the different options, here we consider the problem of extending  the Tutte polynomial to cellularly embedded graphs starting from first principles. We offer three different routes to defining such a polynomial and show that they all lead to  the same polynomial. This resulting polynomial is known in the literature under a few different names including the ribbon graph polynomial, and 2-variable Bollob\'as--Riordan polynomial. \\
Our overall aim here  is to use this discussion as a mechanism for providing a gentle introduction to the topic of Tutte polynomials for graphs embedded in surfaces.
}

\section{Introduction}
If you find yourself in need of a version of the Tutte polynomial for \emph{graphs embedded in surfaces}, rather than for graphs in the abstract, then you will be met with a variety of options in the literature, see~\cite{BR01,BR02,GOODALL_2018,Goodall_2020,HUGGETT_2019,Krajewski_2018,KRUSHKAL_2010,Vergnas_1980,NegamiEmb} and the survey~\cite{Chmutov_2022}.
The best known polynomial among these is  the \emph{Bollob\'as--Riordan polynomial}~\cite{BR01,BR02} which is a 4-variable polynomial of embedded graphs. However, most  of the results about these topological Tutte polynomials concern a 2-variable specialisation of this polynomial, which is known variously as the \emph{ribbon graph polynomial}, the \emph{2-variable Bollob\'as--Riordan polynomial}, and the \emph{Tutte polynomial of a cellularly embedded graph}. (Although some important properties, such as connections between tensions and flows, require moving to the other embedded graph polynomials~\cite{GOODALL_2018,Goodall_2020,maya}.) 
While as a community we agree what the Tutte polynomial of a graph is,  there is less clarity around  what the Tutte polynomial of an embedded graph should be.

In this paper we consider the problem of constructing a Tutte polynomial for embedded graphs. We approach this problem by 
going back to first principles and considering how to construct a polynomial for graphs in surfaces that, just as  the classical Tutte polynomial, has a recursive deletion-contraction definition with a base case given by edgeless graphs. 
We detail three approaches for doing this: an approach  through the dichromatic polynomial, a state-sum approach, and  an approach based upon the deletion-contraction cases of the classical Tutte polynomial. All three approaches result in the same polynomial. 

Our discussion here builds upon the work on topological Tutte polynomials in~\cite{HUGGETT_2019}, which in turn built upon work on Tutte polynomials for Hopf algebras in~\cite{Krajewski_2018}. The results in Section~\ref{sec:ap2} and~\ref{sec:ap3} are developed directly from these sources, but the approach detailed in Sections~\ref{sec:di} and~\ref{sec:ap1} is new to this paper. We also give a quasi-tree expansion for our  polynomial in Section~\ref{sec:qt} and give an overview of its properties in Section~\ref{sec:further}.   

\medskip

This paper was developed from a talk given and prepared at the MATRIX \emph{Workshop on Uniqueness and Discernment in Graph Polynomials},  16th -- 27th October 2023. I would like to thank MATRIX for providing an excellent research environment.

\section{Background on embedded graphs}
This section provides an overview of cellularly embedded graphs, ribbon graphs, and some of their standard constructions and terminology. We refer the reader to ~\cite{zbMATH05202336} for additional background on  graph theory, ~\cite{Ellis_Monaghan_2013} for background on embedded graphs and ribbon graphs as we use them here,~\cite{zbMATH04006288} for standard background on embedded graphs, and~\cite{HUGGETT_2019} for further discussion of deletion and contraction for embedded graphs. 

We assume a familiarity with surfaces and the classification of surfaces, but remind the reader of a few relevant facts here. Additional background can be found in many standard topology or topological graph theory texts, for example~\cite[Chapter~1]{zbMATH03716424}. 
We use the term \emph{surface} here to mean a compact Hausdorff topological space in which every point has a neighbourhood homeomorphic to an open 2-dimensional disc. A \emph{surface with boundary}  here means a compact Hausdorff topological space in which every point has a neighbourhood homeomorphic to either a 2-dimensional disc or the upper half-plane.  Its \emph{boundary} is the set of all points $p$ that have a neighbourhood homeomorphic to the half-plane with the homeomorphism mapping $p$ to the origin. 
 Note that here surfaces and surfaces with boundary need not be connected.
 A surface or surface with boundary is \emph{nonorientable} if it contains a subset homeomorphic to a M\"obius band, and is \emph{orientable} otherwise. 
 A connected orientable surface is homeomorphic to a sphere or the connected sum of tori, with its \emph{genus} being the number of tori.   A connected nonorientable surface is homeomorphic to the connected sum of projective planes, with its \emph{genus} being the number of projective planes.  
Up to homeomorphism a connected surface is uniquely determined by its genus and orientability. 
A connected surface with boundary $\Sigma$ consists of a connected surface $\Sigma'$ with the interiors of $k$ discs removed, with $k$ giving its number of boundary components. The genus of  $\Sigma$ is defined as the genus of $\Sigma'$. 
Up to homeomorphism a connected surface with boundary  is uniquely determined by its genus, orientability, and number of boundary components. 
The genus of a non-connected surface with or without boundary is the sum of genus of each of its connected components.

\subsection{Graphs cellularly embedded in surfaces}\label{ss.gins}
We say $\bG=(V,E)$ is a \emph{graph embedded in a surface} $\Sigma$ (or an \emph{embedded graph} for brevity)  if it consists of a set $V$ of points  on $\Sigma$, called \emph{vertices}, and another set $E$ of simple paths in $\Sigma$, called \emph{edges}. The ends of the edges must lie on vertices,  only the ends of edges may meet vertices, and two edges may only intersect where they share a vertex.  Note that we allow the possibility that both ends of an edge meet the same vertex.  We do not require $\Sigma$ to be connected, but we do insist that there is at least one vertex in each connected component of the surface. Note that  the embedded graph $\bG$ has an \emph{underlying graph} $G$ with vertex set $V$, edge set $E$ and incidences determined by the paths in the obvious way. Abusing notation slightly, we say that $\bG$ is an \emph{embedding} of its underlying graph $G$. 
Two embedded graphs are \emph{equivalent} if there is a homeomorphism between the surfaces they are embedded in that induces an isomorphism of the underlying graphs. If the surfaces are orientable, the homeomorphism must be orientation preserving.  We consider embedded graphs up to equivalence.

Deleting the subset of $\Sigma$ formed by the vertices and edges of $\bG$ results in a collection of surfaces with boundary called the  \emph{regions} of $\bG$. (Visualise using a pair of scissors to cut the surface up by snipping along the graph. The result of this gives the regions.) If each region is homeomorphic to a disc then the regions  are called \emph{faces} and  $\bG$ is said to be \emph{cellularly embedded}.  For example, Figure~\ref{embg1.1} shows a graph that is cellularly embedded in a torus, while Figure~\ref{embg1.2} shows a graph that is not cellularly embedded in a torus (one region is a disc but the other is an annulus). Here we are principally interested in cellularly embedded graphs.

\begin{figure}
     \centering
        \begin{subfigure}[c]{0.45\textwidth}
        \centering
        \includegraphics[scale=0.8]{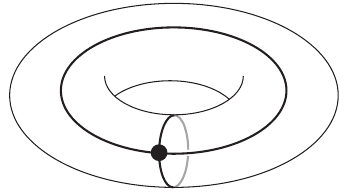}
        \caption{A graph that is cellularly embedded in a torus.}
        \label{embg1.1}
     \end{subfigure}
        \hfill
        \begin{subfigure}[c]{0.45\textwidth}
        \centering
        \includegraphics[scale=0.8]{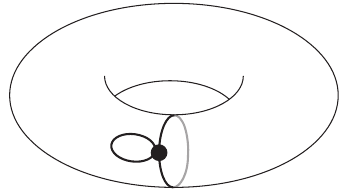}
        \caption{A graph that is not cellularly embedded in a torus.}
        \label{embg1.2}
     \end{subfigure}
\caption{Embedded graphs.}
\label{embg1}
\end{figure}

\medskip 

Defining deletion and contraction of edges
of a cellularly embedded graph requires a little care as we need to ensure that these operations result in a graph that is also cellularly embedded.
Let $\bG$ be a graph cellularly embedded in a surface $\Sigma$, and $e$ be an edge of $\bG$. We would like to define $\bG\ba e$ in the natural way to be the result of removing the edge $e$ from the graph (but leaving the surface unchanged). The difficulty is that this may result in a graph which is not cellularly embedded. (Compare Figure~\ref{delr1} which shows a situation where removing an edge preserves cellular embeddings, and Figures~\ref{del.a2}, and~\ref{del.b2} which shows a situation where it creates non-disc regions.)
In such cases we will modify the surface by replacing any non-disc regions of a non-cellularly embedded graph with discs to obtain a cellular embedding. 
Accordingly we define $\bG$ \emph{delete} $e$, denoted  by $\bG\ba e$, to be the result of the following process. Start by removing $e$ from the set of edges of $\bG$. If this results in a graph that is cellularly embedded in $\Sigma$ then take $\bG\ba e$ to be this cellularly embedded graph. 
Alternatively, if it  results in a graph that is not cellularly embedded in $\Sigma$ then modify the surface using the following process. If removing the edge creates any isolated vertices then place each isolated vertex in its own copy of the sphere. 
 Next delete any non-disc regions from $\Sigma$. Then for each boundary component, take a disc and identify its boundary with the boundary component to obtain a graph cellularly embedded in a new surface $\Sigma'$.
 Take $\bG\ba e$ to be this cellularly embedded graph. See Figure~\ref{del2}.
Note that if $G$ is the underlying graph of $\bG$ then the graph $G\ba e$ (with graph deletion) is the underlying graph of $\bG\ba e$. Also note $\bG$ and $\bG \ba e$ may be  embedded in non-homeomorphic surfaces, see Figure~\ref{dcexamp}.

\begin{figure}[ht]
     \centering
        \hfill
        \begin{subfigure}[c]{0.3\textwidth}
        \centering
        \labellist
        \small\hair 2pt
        \pinlabel $e$ at 66 50
        \endlabellist
        \includegraphics[width=35mm]{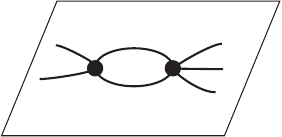}
        \caption{$\bG$.}
        \label{rdel.a1}
     \end{subfigure}
        \hfill
        \begin{subfigure}[c]{0.3\textwidth}
        \centering
        \includegraphics[width=35mm]{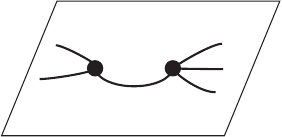}
        \caption{$\bG\ba e$.}
        \label{rcont.b1}
     \end{subfigure}
\caption{An instance of edge deletion.}
\label{delr1}
\end{figure}

\begin{figure}
     \centering
        \hfill
        \begin{subfigure}[c]{0.3\textwidth}
        \centering
        \labellist
\small\hair 2pt
\pinlabel $e$ at 35 28
\endlabellist
\includegraphics[width=35mm]{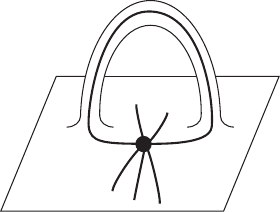}
        \caption{$\bG$ in $\Sigma$.}
        \label{del.a2}
     \end{subfigure}
        \hfill
        \begin{subfigure}[c]{0.3\textwidth}
        \centering
        \includegraphics[width=35mm]{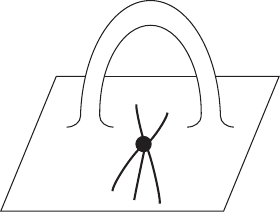}
        \caption{Removing $e$.}
        \label{del.b2}
     \end{subfigure}
        \hfill
        \begin{subfigure}[c]{0.35\textwidth}
        \centering
        \includegraphics[width=35mm]{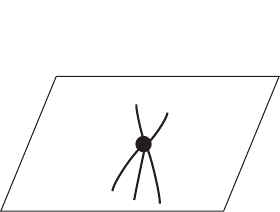}
        \caption{Removing the handle to get $\bG\ba e$.}
        \label{del.c2}
     \end{subfigure}
\caption{An instance where edge deletion involves removing redundant handles.}
\label{del2}
\end{figure}

\begin{figure}
     \centering
        \hfill
        \begin{subfigure}[c]{0.3\textwidth}
        \centering
        \labellist
        \small\hair 2pt
        \pinlabel $e$ at 38 21
        \pinlabel $a$ at 60 11
        \pinlabel $b$ at 75 41
        \endlabellist
                \includegraphics[scale=1.2]{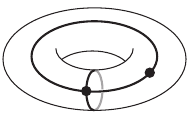}
        \caption{$\bG$ with an edge $e$.}
        \label{dcexamp.1}
     \end{subfigure}
        \hfill
        \begin{subfigure}[c]{0.3\textwidth}
        \centering
           \labellist
        \small\hair 2pt
          \pinlabel $a$ at 29 44
        \pinlabel $b$ at 29 30
        
        \endlabellist
        \includegraphics[scale=1.2]{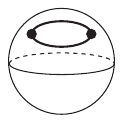}
        \caption{$\bG\ba e$.}
        \label{dcexamp.2}
     \end{subfigure}
        \hfill
        \begin{subfigure}[c]{0.3\textwidth}
        \centering
          \labellist
        \small\hair 2pt
       \pinlabel $a$ at 20 38
        \pinlabel $b$ at 35 38
        \endlabellist
        \includegraphics[scale=1.2]{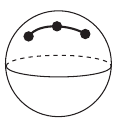}
        \caption{$\bG\con e$.}
        \label{dcexamp.3}
     \end{subfigure}
\caption{An example where deletion and contraction changes the surface.}
\label{dcexamp}
\end{figure}

Contraction leads to another dichotomy. We would like to define $\bG/e$ as the image of the graph $\bG$ in the surface $\Sigma$ under the formation of the topological quotient $\Sigma/e$ which identifies the path $e$ to a point, this point being a new vertex. If we start with a graph cellularly embedded in a surface then sometimes we obtain another graph embedded in a surface, as in  Figure~\ref{conr1}, but sometimes a pinch point is created, as in Figures~\ref{cont.a2} and~\ref{cont.b2}. (the pinch points are points that have neighbourhoods homeomorphic to a cone.) 
In such a situation we  `resolve' pinch points as in Figure~\ref{cont.c2} by `cutting them open', which splits the vertex into two.
More formally, by \emph{resolving a pinch point} we mean the result of the following process. Delete a small neighbourhood of the pinch point. This creates a surface with boundary. Next, by forming the topological quotient space, shrink each boundary component to a point and make this point a vertex. 
Thus we  define $\bG$ \emph{contract} $e$, denoted by $\bG\con e$, to be the cellularly embedded graph obtained by identifying the edge $e$ to a point, then resolving any pinch point that is created.
Observe that if $G$ is the underlying graph of $\bG$, then $G/e$ (with graph contraction) may not be the underlying graph of $\bG/e$. Also $\bG$ and $\bG/ e$ may be graphs embedded in non-homeomorphic surfaces, see Figure~\ref{dcexamp}.

\begin{figure}[ht]
     \centering
        \hfill
        \begin{subfigure}[c]{0.3\textwidth}
        \centering
        \labellist
        \small\hair 2pt
        \pinlabel $e$ at 66 50
        \endlabellist
        \includegraphics[width=35mm]{figs/f1}
        \caption{$\bG$.}
        \label{r2del.a1}
     \end{subfigure}
        \hfill
        \begin{subfigure}[c]{0.3\textwidth}
        \centering
        \includegraphics[width=35mm]{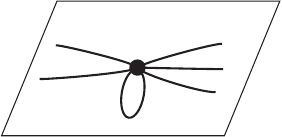}
        \caption{$\bG/e$.}
        \label{r2cont.c1}
     \end{subfigure}
\caption{An instance of edge contraction.}
\label{conr1}
\end{figure}

\begin{figure}
     \centering
        \begin{subfigure}[c]{0.3\textwidth}
        \centering
        \labellist
\small\hair 2pt
\pinlabel $e$ at 107 27
\endlabellist
\includegraphics[height=13mm]{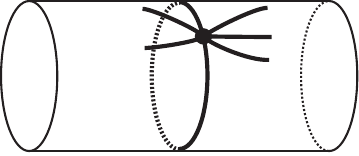}
        \caption{$\bG$ in $\Sigma$.}
        \label{cont.a2}
     \end{subfigure}
        \hfill
        \begin{subfigure}[c]{0.3\textwidth}
        \centering
        \includegraphics[height=13mm]{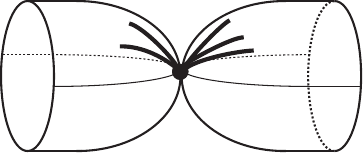}
        \caption{Quotient by $e$.}
        \label{cont.b2}
     \end{subfigure}
\hfill
        \begin{subfigure}[c]{0.3\textwidth}
        \centering
        \includegraphics[height=13mm]{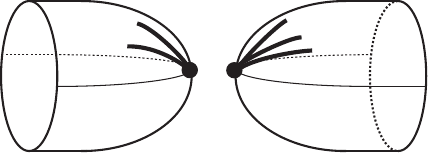}
        \caption{Resolving the pinch point  to get $\bG/ e$.}
        \label{cont.c2}
     \end{subfigure}
\caption{An instance where edge contraction involves resolving pinch points.}
\label{cont2}
\end{figure}

Although not immediately obvious from the above constructions, it can be shown  that for distinct edges $e$ and $f$ of   $\bG$, we have   $(\bG\ba e) \ba f  =  (\bG\ba f) \ba e  $, $(\bG\con e) \con f  =  (\bG\con f) \con e  $, and $(\bG\ba e) \con f  =  (\bG\con f) \ba e  $.  (An easy way to see this is to describe deletion and contraction in terms of arrow presentations, as described in \cite[Section~4.1]{Ellis_Monaghan_2013}. It is then immediate that deletion and contraction act locally at an edge and hence the  commutativity properties hold.) We may therefore make the following definitions. 
If $A$ is a set of edges of $\bG$ then $\bG\ba A$ is the result of deleting each edge in $A$ from $\bG$ (in any order). Similarly, $\bG\con A$ is the result of contracting each edge in $A$  (in any order). An embedded graph of the form $\bG\ba A$ is said to be a \emph{spanning subgraph} of $\bG$.

\medskip 

Here our interest is in cellularly embedded graphs. Henceforth we shall use the term ``embedded graph'' to mean ``cellularly embedded graph'', and similarly ``graphs embedded in surfaces'' to mean ``graphs cellularly embedded in surfaces''.

\subsection{Ribbon graphs} 

When working with polynomials of graphs embedded in surfaces it is common to describe embedded graphs as ribbon graphs.
A \emph{ribbon graph} $\bG =\left(V,E\right)$ is a surface with boundary, represented as the union of two sets of discs: a set $V$ of \emph{vertices} and a set of \emph{edges} $E$ with the following properties.
\begin{enumerate}
 \item The vertices and edges intersect in disjoint line segments.
 \item Each such line segment lies on the boundary of precisely one vertex and precisely one edge. In particular, no two vertices intersect, and no two edges intersect.
 \item Every edge contains exactly two such line segments.
\end{enumerate}
See Figures~\ref{f.descb} and~\ref{f1} for some pictures of ribbon graphs.

Two ribbon graphs are \emph{equivalent} if there is a homeomorphism (which should be orientation-preserving if the ribbon graphs are orientable) that sends vertices to vertices, and edges to edges. We consider ribbon graphs up to equivalence.
In particular equivalence preserves the vertex-edge structure, adjacency, and cyclic ordering of the edges incident to each vertex.
It is worth emphasising that ribbon graph equivalence demands more than that the underlying surfaces with boundary are homeomorphic.

It is well-known that ribbon graphs are just descriptions of graphs cellularly embedded
in surfaces (see for example~\cite{Ellis_Monaghan_2013}, or \cite{zbMATH04006288} where ribbon graphs are called reduced band decompositions). If $\bG$ is a cellularly embedded graph, then a ribbon
graph representation results from taking a small neighbourhood of
the cellularly embedded graph $\bG$, and deleting its complement. On the other hand, if $\bG$ is a
ribbon graph, then, topologically, it is a surface with boundary. Capping off the holes, that is, `filling in' each hole by identifying its boundary component with the boundary of a disc, results in a ribbon graph embedded in a  surface from which a graph embedded in the surface is readily obtained. Figure~\ref{f.desc} shows a cellularly embedded graph and its corresponding ribbon graph.
Two ribbon graphs are equivalent if and only if they describe equivalent cellularly embedded graphs.

\begin{figure}
     \centering
        \hfill
        \begin{subfigure}[c]{0.45\textwidth}
        \centering
        \labellist \small\hair 2pt
\pinlabel {$1$} at   60 22.7
\pinlabel {$2$} at   54 45.6
\pinlabel {$3$} at   38 7
\pinlabel {$4$} at   79 7
\endlabellist
\includegraphics[height=2.5cm]{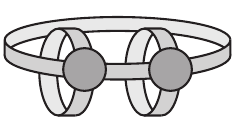}
        \caption{A ribbon graph.}
        \label{f.descb}
     \end{subfigure}
        \hfill
        \begin{subfigure}[c]{0.45\textwidth}
        \centering
        \labellist \small\hair 2pt
\pinlabel {$1$} at   84 14
\pinlabel {$2$} at    135 19
\pinlabel {$3$} at    68 32
\pinlabel {$4$} at   115 32
\endlabellist
\raisebox{0mm}{\includegraphics[height=2.5cm]{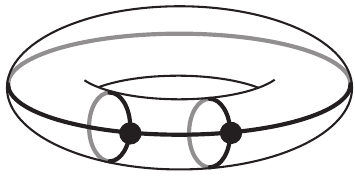}}
        \caption{The corresponding cellularly embedded graph.}
        \label{f.desca}
     \end{subfigure}
\caption{Embedded graphs and ribbon graphs.}
\label{f.desc}
\end{figure}

If $A\subseteq E$, then $\bG\ba A$ is the \emph{spanning ribbon subgraph} of $G=(V,E)$ obtained by \emph{deleting} the edges in $A$. We shall write $\bG\ba e $ for $\bG\ba \{e\} $. 

The definition of edge contraction, introduced in~\cite{BR02,zbMATH05569114}, is a little more involved than that of edge deletion. 
Let $e$ be an edge of $\bG$ and $u$ and $v$ be its incident vertices, which are not necessarily distinct. Then $\bG/e$ denotes the ribbon graph obtained as follows: consider the boundary component(s) of $e\cup u \cup v$ as curves on $\bG$. For each resulting curve, attach a disc, which will form a vertex of $\bG/e$, by identifying its boundary component with the curve. Delete the interiors of $e$, $u$ and $v$ from the resulting complex.
We say that $\bG/e$ is obtained from $\bG$ by \emph{contracting} $e$. If $A\subseteq E$, $\bG/A$ denotes the result of contracting all of the edges in $A$. (The order in which they are contracted does not matter. Indeed distinct edges can be deleted or contracted in any order, a fact that can be verified using the same justification described for embedded graphs at the end of Section~\ref{ss.gins}.)

The local effect of contracting an edge of a ribbon graph is shown in Table~\ref{tablecontractrg}. An  example is given in Figure~\ref{f1}.
Note that contracting an edge in $\bG$ may change the number of vertices, number of components, or orientability.
Deletion and contraction for ribbon graphs are compatible with the deletion and contraction operations for cellularly embedded graphs described above.  

\begin{table}[t]
\centering
\begin{tabular}{|c||c|c|c|}\hline
 &  non-loop & non-orientable loop & orientable loop\\ \hline
\raisebox{6mm}{$\bG$} &
\includegraphics[scale=.25]{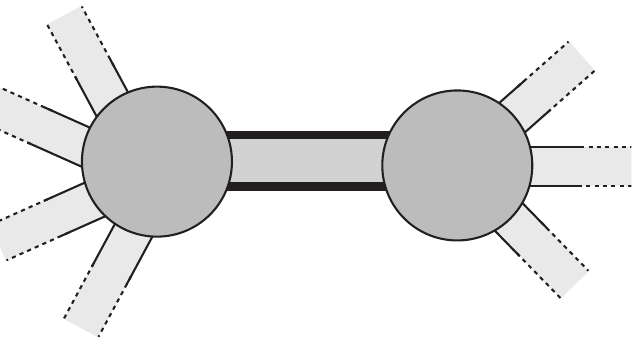} &\includegraphics[scale=.25]{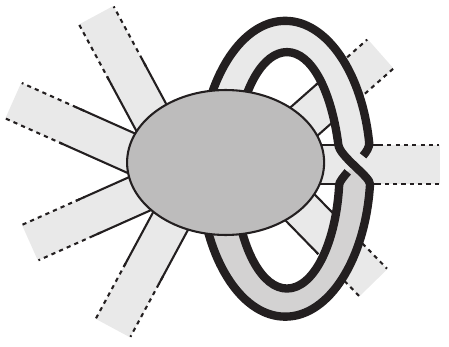} &\includegraphics[scale=.25]{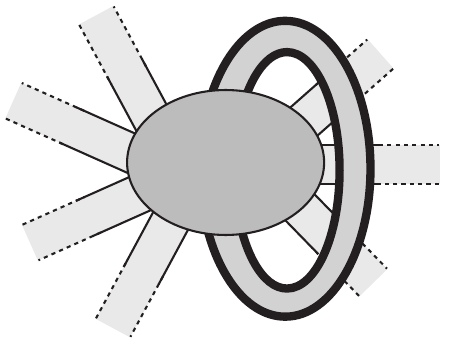}
\\ \hline
\raisebox{6mm}{$G\ba e$}
&
\includegraphics[scale=.25]{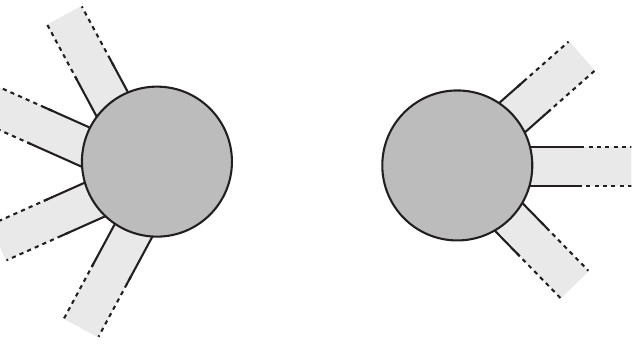} &\includegraphics[scale=.25]{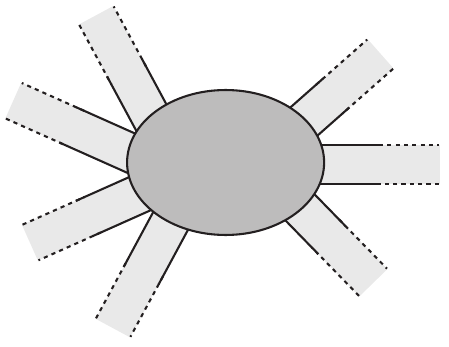}&\includegraphics[scale=.25]{figs/ch4_35a}
\\ \hline
\raisebox{6mm}{\begin{tabular}{l} $\bG/e$ \end{tabular}}
&
\includegraphics[scale=.25]{figs/ch4_35a} &\includegraphics[scale=.25]{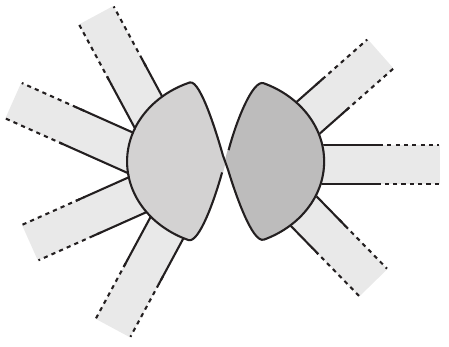}&\includegraphics[scale=.25]{figs/ch4_38a} 
\\ \hline
\end{tabular}
\caption{Operations on an edge $e$ (highlighted in bold) of a ribbon graph.}
\label{tablecontractrg}
\end{table}

\begin{figure}
     \centering

        \hfill
        \begin{subfigure}[c]{0.45\textwidth}
        \centering
        \labellist
\small\hair 2pt
\pinlabel {$1$}  at  106 228
\pinlabel {$2$}   at    25 201
\pinlabel {$3$}  at    144 186
\pinlabel {$4$}   at   83 121
\pinlabel {$5$}   at     146 79
\pinlabel {$6$}  at   131 99
\pinlabel {$7$}   at    166 13
\pinlabel {$8$}   at  250 99
\endlabellist
\includegraphics[scale=.55]{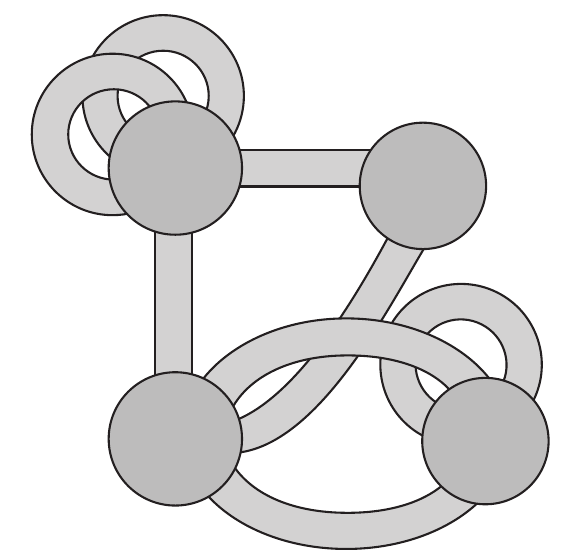}
        \caption{$\bG$.}
        \label{f1a}
     \end{subfigure}
        \hfill
        \begin{subfigure}[c]{0.45\textwidth}
        \centering
        \labellist
\small\hair 2pt
\pinlabel {$2$}   at    25 201
\pinlabel {$5$}  at    142 150
\pinlabel {$4$}   at   80 121
\pinlabel {$6$}  at   173 121
\pinlabel {$7$}   at    134 29
\endlabellist
\includegraphics[scale=.55]{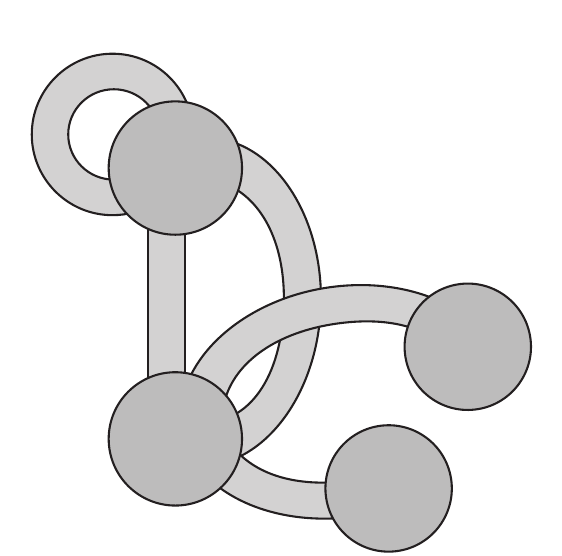}
        \caption{$\bG/{\{3,8\}}  \ba \{1\} $.}
        \label{f1b}
     \end{subfigure}
     \caption{An illustration of ribbon graph operations.}
\label{f1}
\end{figure}

\subsection{Further notation and terminology}

For an embedded graph or ribbon graph $\bG=(V,E)$, and for $A\subseteq E$ we use the following notation and terminology.
\begin{itemize}
\item We use $V(\bG)$ to denote the vertex set of $\bG$, and  $E(\bG)$ to denote its edge set. When it is clear from context we shall often just write $V$ for $V(\bG)$, and $E$ for $E(\bG)$.
\item $e(\bG)$ denotes the number of edges in $\bG$. $e(A)$ denotes $e(\bG \ba (E - A)) =|A|$.
\item $v(\bG)$ denotes the number of vertices in $\bG$. $v(A)$ denotes $v(\bG \ba (E - A)) =v(\bG)$.
\item $k(\bG)$ denotes the number of  components of the underlying graph when $\bG$ is an embedded graph; or, if $\bG$ is a ribbon graph, its number of connected components. $k(A)$ denotes $k(\bG \ba (E - A))$. 
\item $r(\bG) := v(\bG)-k(\bG)$ is the \emph{rank} of $\bG$. $r(A)$ denotes $r(\bG \ba (E - A))$ which equals $v(A)-k(A)$.
\item $b(A)$ denotes the number of boundary components of a small neighbourhood of the subcomplex $V\cup A$ in the graph $\bG=(V,E)$ embedded in a surface $\Sigma$. Observe that $b(E)$ equals the number of faces of $\bG$.  Alternatively, if $\bG=(V,E)$ is realised as a ribbon graph, then   $b(A)$ is the number of boundary components of its ribbon subgraph $(V,A)$. Additionally, $b(\bG)$ denotes $b(E)$.
\item  $\gamma(\bG)$ is the \emph{Euler genus} of $\bG$. It is defined by \emph{Euler's formula}  
\begin{equation}\label{eq:eulers} 
v(\bG)-e(\bG)+b(\bG) = 2k(\bG) - \gamma(\bG). 
\end{equation}
$\gamma(A)$ denotes  $\gamma(\bG \ba (E - A))$. Note that Euler genus is additive over connected components.
\item A graph embedded in a surface is \emph{orientable} if it is embedded in an orientable surface, otherwise it is \emph{nonorientable}. A ribbon graph is \emph{orientable} if it is orientable when considered as a surface with boundary, otherwise it is \emph{nonorientable}. Note that a nonorientable embedded graph or ribbon graph may have orientable components.
\item $g(\bG)$ denotes the \emph{genus} of $\bG$. If $\bG$ is a graph  embedded in a surface this is the genus of the surface it is embedded in. If $\bG$ is a ribbon graph this is its genus as a surface with boundary. 
$g(A)$ denotes  $g(\bG \ba (E - A))$.  If $\bG$ is orientable then $\gamma(\bG)=2g(\bG)$, and if it is nonorientable $\gamma(\bG)=g(\bG)$. Note that  genus is additive over connected components. Finally, $\bG$ is said to be \emph{plane} if $g(\bG)=0$. 

\end{itemize}
We apply the above notation to (non-embedded) graphs where it makes sense to do so.

As an example,  if $\bG$ is as in Figure~\ref{f.desc} then $b(\bG)=2$, $b(\{3,4\})=4$, $\gamma(\bG)=2$, and, as $\bG$  is orientable, it has genus  1.

\section{A Tutte polynomial for graphs embedded in surfaces}\label{sec:clas}
In this section we consider the problem of defining a Tutte polynomial for embedded graphs. We detail three approaches for defining it and show that all three approaches result in the same polynomial. We then give a summary of   properties of this polynomial.

\subsection{A reminder of the classical Tutte polynomial of a graph}
Our staring point is the classical Tutte polynomial of a graph. We refer the reader to~\cite{zbMATH07553843} for additional background on the Tutte polynomial. The \emph{Tutte polynomial}, $T(G;x,y) \in\mathbb{Z}[x,y]$, of a graph $G=(V,E)$ 
is the polynomial uniquely defined by the following recursive  \emph{deletion-contraction relations}:
\begin{equation}\label{eq:tdc}
T(G;x,y) :=\begin{cases}   x\, T(G / e;x,y) &  \text{if $e$ is a bridge,} \\
 y\, T(G\backslash e;x,y ) &  \text{if $e$ is a loop,} \\
 T(G\backslash e;x,y ) + T(G/ e;x,y ) &  \text{if $e$ is neither a bridge or loop,}\\
   1 &  \text{if $G$ is edgeless}.
 \end{cases} \end{equation}
Thus $T(G;x,y)$ is computed by applying the first three relations in~\eqref{eq:tdc} to express $T(G;x,y)$ as a $\mathbb{Z}[x,y]$-linear combination of Tutte polynomials of edgeless graphs, each of which evaluates to 1 by the fourth relation in~\eqref{eq:tdc}. It is not immediate that $T(G;x,y)$, as we have defined it, is independent of the choice  of the order of the edges in which  we apply the relations in~\eqref{eq:tdc}. The standard way of proving that it is indeed independent of this choice is to show that  
\begin{equation}\label{eq:tsum}
T(G;x,y) = \sum_{A\subseteq E} (x-1)^{r(E)-r(A)} (y-1)^{|A|-r(A)},
\end{equation}
where $r(A)$ denotes the rank of the spanning subgraph $(V,A)$ of $G=(V,E)$. The right-hand side of~\eqref{eq:tsum} is clearly well-defined and hence so is $T(G;x,y)$ (details can be found in~\cite{zbMATH01179517}).

\subsection{Defining a polynomial through deletion-contraction relations}\label{sec:di}

We are interested in finding an analogue of Equation~\eqref{eq:tdc}, and hence a version of the Tutte polynomial, for graphs embedded in  surfaces. 
We start by trying to make the most obvious changes to Equation~\eqref{eq:tdc}. First we replace graphs with embedded graphs (so we replace each $G$ in the equation with $\bG$), then our deletion and contraction operations ``$\ba e$'' and ``$\con e$'' become the embedded graph versions, rather than the graph versions.  In the graphs case, Equation~\eqref{eq:tdc} is split into cases according to whether $e$ is a bridge, loop or neither.  It does not make sense to consider these three cases in our topological version.\footnote{It is not hard to see that using the cases in Equation~\eqref{eq:tdc}  for the topological polynomial results in an embedded graph polynomial that equals the Tutte polynomial of the underlying graph.} 
So far we have that our deletion-contraction relations for embedded graphs should be split into cases, but it is not at all obvious what these cases should be. 
Our  approach is to avoid the cases in~Equation~\eqref{eq:tdc} altogether, starting by rewriting the Tutte polynomial of a graph so that it treats every  edge in exactly the same way.

\medskip

The \emph{dichromatic polynomial} $Z(G;u,v) \in \mathbb{Z}[u,v]$ of a graph $G=(V,E)$ can be defined as
\[ Z(G;u,v) := \sum_{A\subseteq E} u^{|A|}v^{k(A)}. \]
By expanding exponents in Equation~\eqref{eq:tsum}, it can be shown that the dichromatic polynomial and Tutte polynomial are equivalent, with
\begin{equation}\label{eq:tzgraphs}
  T(G;x+1,y+1)= x^{-k(G)}y^{-v(G)} Z(G; y, xy). 
  \end{equation}
The significance  of $Z(G;u,v)$ for us here is that its deletion-contraction relations treat all edges in the same way: 
\begin{equation}\label{eq:zdc}
Z(G;u,v) =\begin{cases}  
 Z(G\ba e;u,v ) + u\, Z(G\con e;u,v  ) &  \text{if $e$ is an edge,}\\
   v^{v(G)} &  \text{if }E(G)=\emptyset.
 \end{cases} \end{equation}

Thus we may may proceed in the obvious way by defining an analogue of the dichromatic polynomial for graphs  embedded in surfaces by replacing the graphs in Equation~\eqref{eq:zdc} with embedded graphs and setting 
\begin{equation}\label{eq:z1}
Z(\bG;u,v) :=\begin{cases}  
 Z(\bG\ba e;u,v ) +u Z(\bG\con e;u,v  ) &  \text{if $e$ is an edge,}\\
   v^{v(\bG)} &  \text{if }E(G)=\emptyset.
 \end{cases} \end{equation}
It is worth emphasising that Equation~\eqref{eq:zdc} uses graph deletion and contraction whereas Equation~\eqref{eq:z1} uses embedded graph deletion and contraction.  Although easily overlooked due to the similarity in the notation, this difference is crucial here.

\begin{example}\label{examp:z1}
Figure~\ref{fig:zdc} shows two calculations using Equation~\eqref{eq:z1}. Figure~\ref{fig:zdc1}  shows $Z(\bG;u,v)$  where $\bG$ consists of one vertex and two edges  embedded in the torus computed in terms of embedded graphs. This gives $Z(\bG;u,v)=v+2uv^2+u^2v$. 
Figure~\ref{fig:zdc2}  shows $Z(\bG;u,v)$ where $\bG$ consists of a 2-cycle embedded in the real projective plane computed in terms of ribbon graphs. This gives  $Z(\bG;u,v)=v^2+2uv+u^2v$. 
\end{example}

\begin{figure}[ht]
   \centering
        \hfill
        \begin{subfigure}[c]{0.45\textwidth}
        \centering
       \labellist
\small\hair 2pt
\pinlabel $1$ at 180 436
\pinlabel $u$ at 452 435
\pinlabel $1$ at 61 175
\pinlabel $u$ at 234 175
\pinlabel $1$ at 386 175
\pinlabel $u$ at  564 175
\pinlabel $v$ at  52 18
\pinlabel $uv^2$ at  225 18
\pinlabel $uv^2$ at  400 18
\pinlabel $u^2v$ at  550 18
\pinlabel $+$ at  111 18
\pinlabel $+$ at  328 18
\pinlabel $+$ at  479 18
\endlabellist
\includegraphics[scale=0.25]{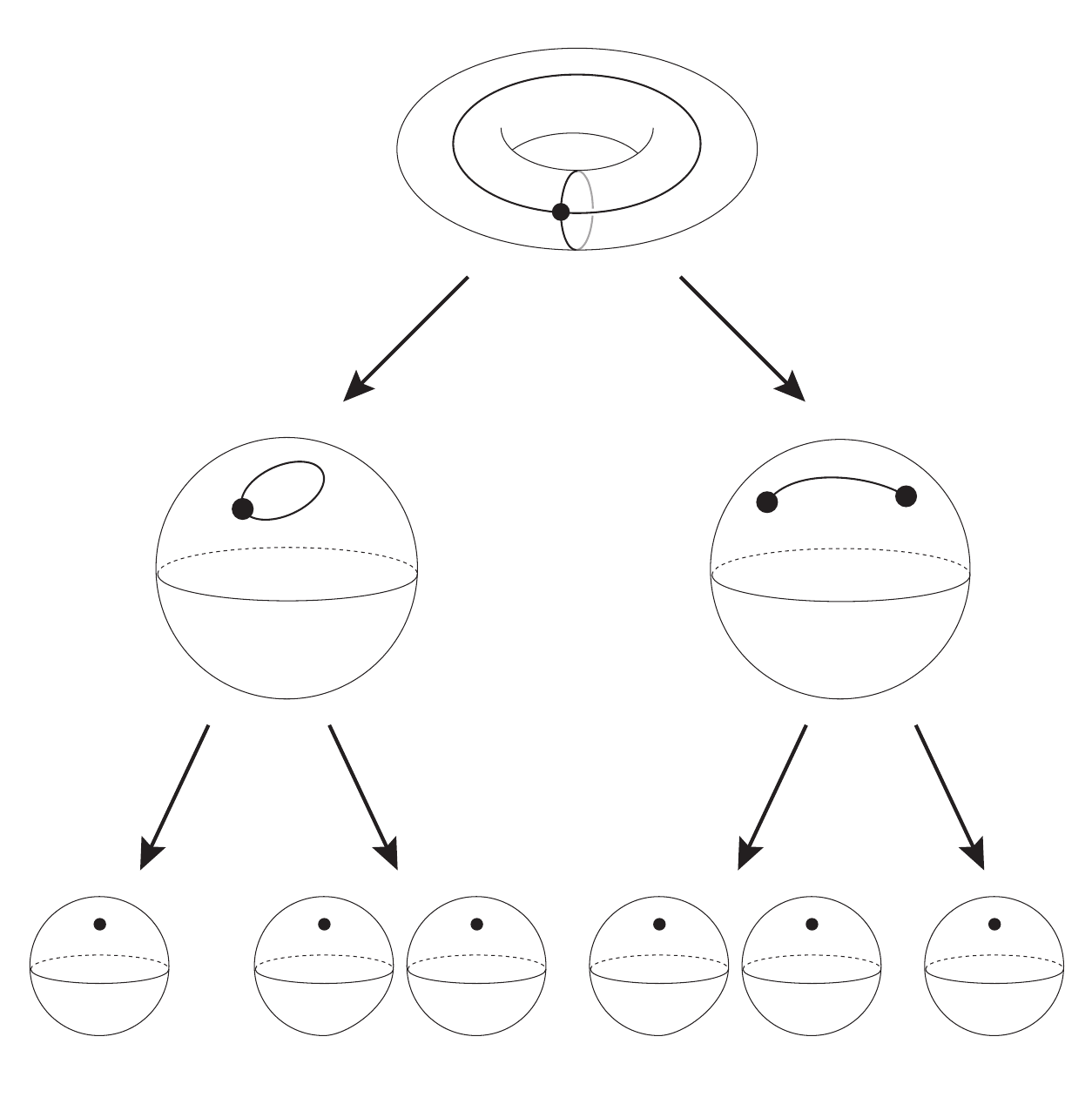}
        \caption{A calculation with embedded graphs.}
        \label{fig:zdc1}
     \end{subfigure}
        \hfill
        \begin{subfigure}[c]{0.45\textwidth}
        \centering
               \labellist
\small\hair 2pt
\pinlabel $1$ at 180 436
\pinlabel $u$ at 452 435
\pinlabel $1$ at 61 175
\pinlabel $u$ at 234 175
\pinlabel $1$ at 386 175
\pinlabel $u$ at  564 175
\pinlabel $v^2$ at  78 18
\pinlabel $uv$ at  240 18
\pinlabel $uv$ at  400 18
\pinlabel $u^2v$ at  550 18
\pinlabel $+$ at  180 18
\pinlabel $+$ at  328 18
\pinlabel $+$ at  479 18
\endlabellist
\includegraphics[scale=0.25]{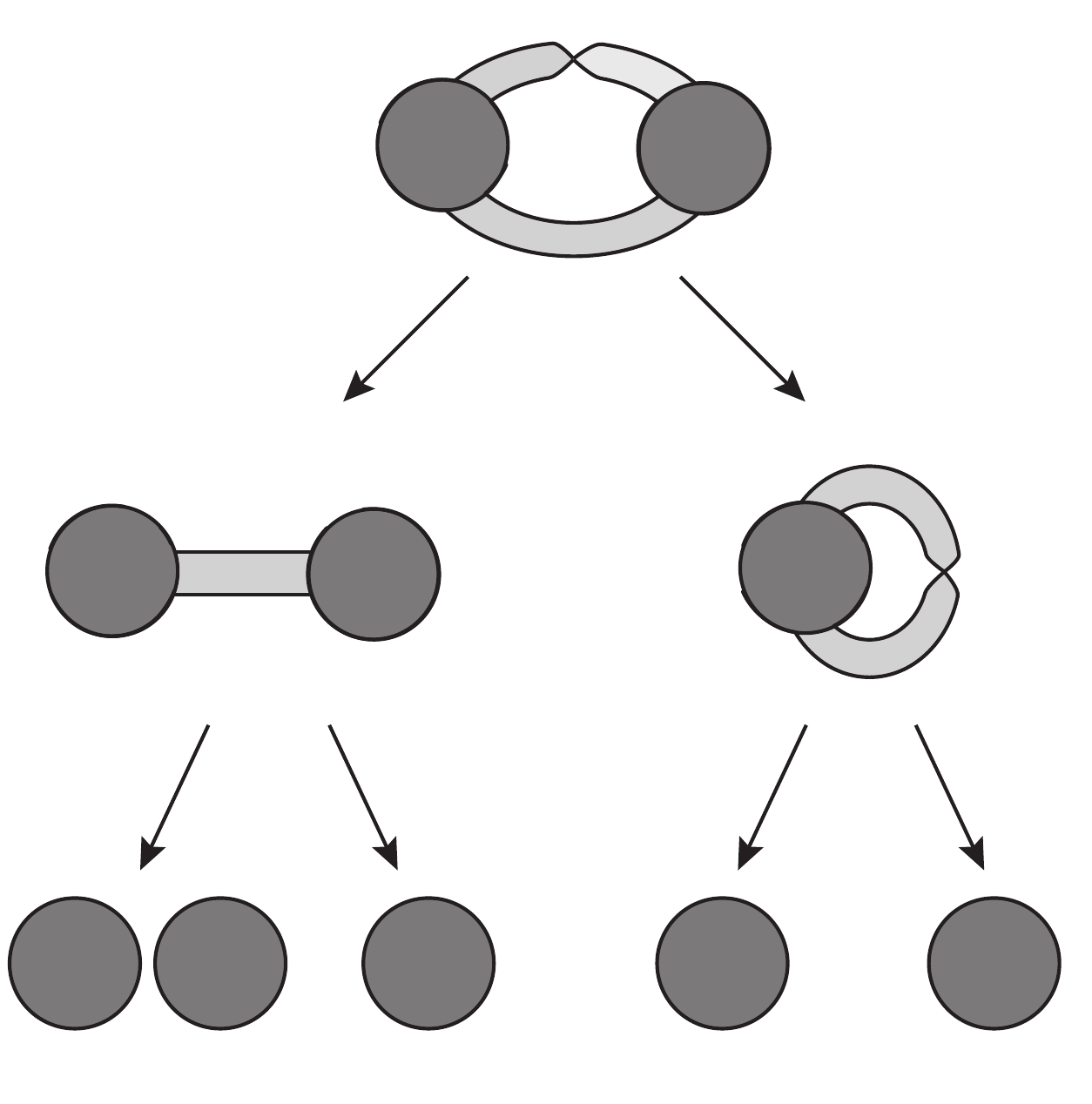}
        \caption{A calculation with ribbon graphs.}
        \label{fig:zdc2}
     \end{subfigure}
     \caption{Two deletion-contraction computations of $Z(\bG;u,v)$.}
\label{fig:zdc}
\end{figure}

It is not immediately obvious from~\eqref{eq:z1}  that $Z(\bG;u,v)$  is well-defined (i.e., that the resulting value is independent of the order that edges are chosen in any calculation). However it is routine to show  that 
\begin{equation}\label{eq:z2}
Z(\bG;u,v) = \sum_{A\subseteq E} u^{|A|}v^{b(A)}, 
\end{equation}
and so $Z(\bG;u,v)$  is indeed well-defined.

\begin{example}\label{examp:z2}
If $\bG$ is the embedded graph or ribbon graph shown in Figure~\ref{f.desc}, then the contribution of each subset $A$ of edges to the sum in Equation~\eqref{eq:z2} is  given in the following table. $Z(\bG;u,v)$ is obtained by summing these contributions.
\begin{center}
{\renewcommand{\arraystretch}{1.2}
\begin{tabular}{|c|c|c|c|c|c|c|c|c|}
\hline
 $\{1,2,3,4\} $	&$\{1,3,4\}$ 	& $\{1,2,3\}$	&$\{1,2\}$	&$\{3,4\}$	&$\{1,3\},\{1,4\}$	&$\{1\}$	&$\{3\}$	&$\emptyset$\\
			& $\{2,3,4\}$	&$\{1,2,4\}$	&	&	&$\{2,3\},\{2,4\}$	&$\{2\}$	&$\{4\}$	&
			\\ \hline
 $u^4v^2$	&2	$u^3v^3$&	$2u^3v$	&$u^2v^2$	&$u^2v^4$	&$4 u^2v^2$	&$2uv$	&$2uv^3$ & $ v^2$
\\\hline
\end{tabular}
}
\end{center}
\end{example}

We observe that if $\bG$ is of genus zero  with underlying graph $G$ (i.e., if $\bG$  is an embedding of $G$ into a disjoint union of spheres) then $Z(\bG;u,v)$ and $Z(G;u,v)$ do not coincide. Instead, by making use of Euler's formula, we see that 
\begin{equation}\label{eq:zzplane}
Z(G;u,v) = \sqrt{v}^{v(\bG)}\; Z(\bG; u/\sqrt{v} , \sqrt{v}).
\end{equation}

\subsection{Approach 1: Deducing a Tutte polynomial from $Z(\bG)$}\label{sec:ap1}

One difficulty of this approach is that we have defined an analogue of the dichromatic polynomial rather than the Tutte polynomial. So what should the Tutte polynomial be? 
We know that the Tutte polynomial is equivalent to the dichromatic polynomial as in Equation~\eqref{eq:tzgraphs}. Thus, by analogy,  we should expect our Tutte polynomial $T(\bG;x,y)$ of embedded graphs to be  obtained from  $Z(\bG;u,v)$ by a change of variables. But which change of variables exactly?

To decide this we need to consider what properties $T(\bG;x,y)$ should have. Here we impose the reasonable condition that when $\bG$ is an embedding of a graph $G$ in the plane or sphere (or more generally when $g(\bG)=0)$) we have
\begin{equation}\label{eq:tt}
T(\bG;x,y) =T (G;x,y),
\end{equation}
i.e., the Tutte polynomial of a graph and of an embedded graph coincide for   graphs embedded in the plane.
With this we have 
  \begin{align*}
  T (G;x+1,y+1)  
  & =  x^{\,-k(G)}y^{\,-v(G)} Z(G; y, xy) \quad\text{(by~\eqref{eq:tzgraphs})}
  \\&=x^{\,-k(G)}y^{\,-v(G)} \sqrt{xy}^{\,v(\bG)}\; Z(\bG; y/\sqrt{xy} , \sqrt{xy}) \quad\text{(by~\eqref{eq:zzplane})}
\\&=\sqrt{x}^{\,e(\bG)-b(\bG)} \sqrt{y}^{\,-v(\bG)}\; Z(\bG; \sqrt{y/x} , \sqrt{xy}) \quad\text{(by~Euler's formula).}
  \end{align*}
It is then natural to propose the following definition.
\begin{definition}[Tutte polynomial --- first definition]\label{def:ts1}
The \emph{Tutte polynomial}, $T(\bG;x,y)$, of  an embedded graph  $\bG$ is defined as
\begin{equation}\label{eq:tdef1}
  T (\bG;x+1,y+1)  := \sqrt{x}^{\,e(\bG)-b(\bG)} \sqrt{y}^{\,-v(\bG)}\; Z(\bG; \sqrt{y/x} , \sqrt{xy}).
\end{equation}
\end{definition}

\begin{example}\label{examp:z3}
Making use of Example~\ref{examp:z1}, we see that when  $\bG$ consists of one vertex and two edges embedded in the torus, 
$T (\bG;x,y) = (x-1)+2(x-1)(y-1)+(y-1)=2xy-x-y$; 
and when $\bG$ consists of a 2-cycle embedded in the real projective plane, 
$T (\bG;x,y) = \sqrt{x-1}^{\,3}+2\sqrt{x-1} +\sqrt{y-1}$.
Notice that the first example is a polynomial in $x$ and $y$, but the second is a polynomial in $\sqrt{x-1}$ and $\sqrt{y-1}$. We shall see later that this is related to  orientability.
\end{example}

Thus in this definition of $T (\bG;x,y)$ we have been naturally led to a graph polynomial that:
\begin{enumerate}
\item coincides with the classical Tutte polynomial on plane graphs; and 
\item can be defined by  deletion-contraction relations in which edgeless graphs give the terminal forms, just as with the classical Tutte polynomial of a graph. (The arising deletion-contraction appears later in Definition~\ref{def3}.)
\end{enumerate}
Although we reached this definition of $T(\bG;x,y)$ by taking the obvious direction at each stage,  the precise form on the right-hand side of Equation~\eqref{eq:tdef1} may appear rather strange. However, we shall shortly make use of Euler's formula to show that it really is what we should expect.

\subsection{Approach 2: A state-sum approach to the Tutte polynomial of an embedded graph}\label{sec:ap2}
In this section we discuss an approach based upon the state-sum expression for the Tutte polynomial of a graph where we modify the rank function so that it records some topological information about an embedded graph. 

\medskip

Recall the state-sum definition of the classical Tutte polynomial of a graph from 
Equation~\eqref{eq:tsum}:
\begin{equation}\label{eq:tsumrep}
T(G;x,y) = \sum_{A\subseteq E} (x-1)^{r(E)-r(A)} (y-1)^{|A|-r(A)},
\end{equation}
where the rank $r(A)$ equals $v(A)-k(A)$.

 The rank $r(\bG)=v(\bG)-k(\bG)$ records no topological information about an embedded graph $\bG$ (in the sense that different embeddings of the same graph will always have the same rank). Hence~\eqref{eq:tsum} does not directly result in a satisfactory topological graph polynomial. Let us modify it so that it does.

We start by setting 
\[\rho(\bG) :=  r(\bG) +\tfrac{1}{2}\gamma(\bG).  \]
Following our standard notational conventions, for a subset of edges $A$, we set $\rho(A) :=  \rho(\bG \ba (E-A)) $ and so
\[
\rho(A) =  r(A) +\tfrac{1}{2}\gamma(A).
\]
We can make use of Euler's formula to write 
\[\rho(A) = \tfrac{1}{2}(|A|+v(A)-b(A)).\]

 With $\rho(A)$  we  define a Tutte polynomial for embedded graphs by replacing $r(A)$ in~\eqref{eq:tsumrep} with $\rho(A)$.

\begin{definition}[Tutte polynomial --- second definition]\label{def2}
The \emph{Tutte polynomial}, $T(\bG;x,y)$, of  an embedded graph  $\bG$ with edge set $E$ is defined as
\begin{equation}\label{eq:tdef2}
  T (\bG;x,y)  := \sum_{A\subseteq E} (x-1)^{\rho(E)-\rho(A)} (y-1)^{|A|-\rho(A)}.
\end{equation}
\end{definition}
 Note that if $\bG$ is an embedding of a graph $G$ in a  disjoint union of spheres,  $\gamma(A)$ is always 0. In this case it follows that $\rho(A)=r(A)$ and therefore   $T (\bG;x,y)$ coincides with the classical Tutte polynomial for plane graphs.

\begin{example}\label{examp:z4}
If $\bG$ is the embedded graph or ribbon graph shown in Figure~\ref{f.desc}, then the contributions of each subset $A$ of edges to the sum in Equation~\eqref{eq:tdef2} are given in the following table. $T(\bG;x,y)$ is obtained by summing these contributions.
\begin{center}
{\renewcommand{\arraystretch}{1.2}
\begin{tabular}{|c|c|c|c|c|}
\hline
 $\{1,2,3,4\} $	&$\{1,3,4\}$ 	& $\{1,2,3\}$	&$\{1,2\}$	&$\{3,4\}$	\\
			& $\{2,3,4\}$	&$\{1,2,4\}$	&		&
			\\ \hline
 $(y-1)^{2}$	&	$2(x-1)(y-1)^{2}$&	$2(y-1)$	&$(x-1)(y-1)$	&$(x-1)^{2}(y-1)^{2}$	
\\\hline
\hline
 $\{1,3\},\{1,4\}$	&$\{1\}$	&$\{3\}$	&$\emptyset$&\\
$\{2,3\},\{2,4\}$	&$\{2\}$	&$\{4\}$	& &
			\\ \hline
 $4(x-1)(y-1)$	&$2(x-1)$	&$2(x-1)^{2}(y-1)$ & $(x-1)^{2}$ &
\\\hline
\end{tabular}
}
\end{center}
\end{example}
Giving $T(\bG;x,y) = x^{2} y^{2}+x y -x -y$.

\medskip 

It is easy to see that $r(E) \geq r(A)\geq 0$ and  $\gamma(E) \geq \gamma(A)\geq 0$ so $\rho(E)-\rho(A)\geq 0$. It is a little harder to see that $ |A|-r(A) \geq \gamma(A)$. (To see this, note that $|A|-r(A)$ is the number of edges that are not in a spanning tree of $\bG\ba (E-A)$. The edges that are  not in a spanning tree generate the first homology group of the surface the graph is embedded in and so cannot be smaller than $\gamma(A)$.) Thus $|A|-\rho(A)\geq 0$. 
Additionally, notice that if $\bG$ is orientable then all embedded subgraphs of it are, giving that $\gamma(A)$ is even and so $\rho(A)$ is integral. 
Combining these observations gives that:
\begin{enumerate}
\item for any embedded graph $\bG$ we have that $T(\bG;x,y)$ is a polynomial in $\sqrt{x-1}$ and $\sqrt{y-1}$; 
\item if  $\bG$ is orientable, $T(\bG;x,y)$ is a polynomial in $x$ and $y$.
\end{enumerate}

In Definitions~\ref{def:ts1} and~\ref{def2} we have seen two different approaches to constructing a Tutte polynomial for embedded graphs. The two approaches result in identical polynomials since, making use of Euler's formula to write $\rho(A) = \tfrac{1}{2}(|A|+v(A)-b(A))$ we have
\begin{align*} 
\sqrt{x}^{\,e(\bG)-b(\bG)} \sqrt{y}^{\,-v(\bG)}\; Z(\bG; \sqrt{y/x} , \sqrt{xy})
&=
\sum_{A\subseteq E} \sqrt{x}^{\,( |E|-b(E)) - ( |A|-b(A))}\sqrt{y}^{\,|A|-|V|+b(A)}
\\&=
\sum_{A\subseteq E} x^{\rho(E) - \rho(A)} y^{|A|-\rho(A)}. 
\end{align*}

\medskip

The connection between $T(\bG;x,y)$ and the well-known polynomial of Bollob\'as and Riordan  is readily seen by considering Definition~\ref{def2}.
The \emph{Bollob\'as--Riordan polynomial}~\cite{BR01,BR02} is \[ \BR(\bG; x,y,z,w):= \sum_{A \subseteq E}   (x-1)^{r( E ) - r( A )}   y^{|A|-r(A)} z^{\gamma(A)} w^{t(A)}, \] 
where $t(A)$ is $0$ when $\bG\ba (E-A)$ is orientable, and is one otherwise; and  $w^2=1$.
By expanding the exponents and collecting terms it is readily verified that 
\[   (x-1)^{\tfrac{1}{2} \gamma(\bG)} \BR\Big(x,y-1, \tfrac{1}{\sqrt{(x-1)(y-1)}},1\Big) =T(\bG;x,y). \]
Because of this $T(\bG;x,y)$ is sometimes called the \emph{2-variable Bollob\'as--Riordan polynomial.}

\subsection{Approach 3: deletion-contraction, duality and the Tutte polynomial of an embedded graph}\label{sec:ap3}

In this section we  approach a definition of the Tutte polynomial for embedded graphs by adapting the deletion-contraction relations for the classical Tutte polynomial of a graph shown in~\eqref{eq:tdc}. The approach given here is extracted from the results of~\cite{HUGGETT_2019}, although that reference comes to the polynomial through a slightly different route and so, along with~\cite{Krajewski_2018}, gives a fourth approach to defining our Tutte polynomial.

\medskip

We need some additional background on geometric duals.
The construction of the \emph{geometric dual} $\bG^*$ of a graph $\bG=(V,E)$ embedded in a surface $\Sigma$ is well known: $V(\bG^*)$ is obtained by placing one vertex in each face of $\bG$, and $E(\bG^*)$ is obtained by embedding an edge of $\bG^*$ between two vertices whenever the faces of $\bG$ in which they lie are adjacent. For example, the geometric dual of a 3-cycle embedded in the sphere is a theta-graph embedded in the sphere.

Geometric duality has a particularly neat description in the language of ribbon graphs. Let $\bG=(V(\bG),E(\bG))$ be a ribbon graph. Recalling that  a ribbon graph is a surface with boundary, we cap off the holes using a set of discs (i.e., take a disc for each boundary component, and identify the boundary component with the boundary of the disc), denoted by $V(\bG^*)$, to obtain a surface without boundary. The \emph{geometric dual} of $\bG$ is the ribbon graph $\bG^*=(V(\bG^*),E(\bG))$. The edges of $\bG$ and $\bG^*$ correspond. 
We shall use $e^*$ to denote the edge of $\bG^*$ that corresponds to an edge $e$ of $\bG$. (Additional examples and discussion can be found in, for example,~\cite{Ellis_Monaghan_2013}.)

  \medskip
  
We initially defined the Tutte polynomial  $T(G;x,y)$ of a  (non-embedded) graph $G$ through deletion-contraction relations in Equation~\eqref{eq:tdc}. The difficulty in defining a Tutte polynomial for embedded graphs directly from these deletion-contraction relations is that it is not at all clear what the cases in the relations should be. However,  rewriting Equation~\eqref{eq:tdc} offers a route to determining the cases.

Let $G$ be a  graph embedded in the sphere (better still, for readers familiar with matroids,  let $G$  be a matroid).
Writing $T(G)$ for $T(G;x,y)$, Equation~\eqref{eq:tdc} may be written as
\begin{equation}\label{eq:tdd1}
T(G) = \begin{cases} p(e) \cdot T(G\ba e) +  q(e) \cdot T(G / e)  & \text{for an edge $e$,} 
\\1 & \text{if $G$ is edgeless.} \end{cases}
\end{equation}
where 
\begin{equation}\label{eq:tdd2}
\begin{array}{l}
p(e) =\begin{cases} x-1 & \text{if $e$ is a loop in $G^*$,} \\ 1 &\text{if $e$ is not a loop in $G^*$; and} \end{cases}
\\ 
\\
q(e) =\begin{cases} y-1 & \text{if $e$ is a loop in $G$,} \\ 1 &\text{if $e$ is not a loop in $G$.}\end{cases}
\end{array}
 \end{equation}
 To see why this is the same as Equation~\eqref{eq:tdc} note that $e$ is a loop in a  graph embedded in the sphere  if and only if $e^*$ is a bridge in the dual. (This is not true for graphs embedded in higher genus surfaces.)

\medskip

To deduce a Tutte polynomial $T(\bG;x,y)$ for embedded graphs from this we can immediately write down an analogue of Equation~\eqref{eq:tdd1}:
\begin{equation}\label{eq:tdd3}
T(\bG) = \begin{cases}  f(e) \cdot  T(\bG\ba e) +  g(e) \cdot  T(\bG / e)  ,  & \text{for an edge $e$,} 
\\1 & \text{if $\bG$ is edgeless.} \end{cases}
\end{equation}
where we write $T(\bG)$ for $T(\bG;x,y)$, and where suitable $f(e)$ and $g(e)$ are yet to be found. 
It remains to find the analogue of~\eqref{eq:tdd2}. 

The key observation here is that there are two different types of loop in an embedded graph. As with graphs an edge of an embedded graph or ribbon graph is a \emph{loop} if it is incident to exactly one vertex. A loop of a graph embedded in a surface is said to be \emph{orientable} if it has a  neighbourhood homeomorphic to an annulus, and it is said to be \emph{nonorientable} if it has a  neighbourhood homeomorphic to an M\"obius band. Correspondingly, a loop in a ribbon graph  is said to be \emph{orientable} if together with its incident vertex it forms an annulus, and it is said to be \emph{nonorientable} if it forms a M\"obius band.  See Table~\ref{tablecontractrg}.

Thus, for some suitable terms $a,b,c,a^*,b^*,c^*$, we set
\begin{equation}\label{eq:tdd4}
\begin{array}{l}
f(e) =\begin{cases} a^* & \text{if $e^*$ is an orientable loop in $\bG^*$,} \\ 
b^* & \text{if $e^*$ is a  nonorientable loop in $\bG^*$,} \\ 
c^* & \text{if $e^*$ is a  not a  loop in $\bG^*$;}  \end{cases}
\\ \text{and}\\
g(e) =\begin{cases} a & \text{if $e$ is an orientable loop in $\bG$,} \\ 
b & \text{if $e$ is a nonorientable loop in $\bG$,} \\ 
c & \text{if $e$ is   not a loop in $\bG$.}  \end{cases}
\end{array}
 \end{equation}

 Let us consider what $a,b,c,a^*,b^*,c^*$ can be. First we would like to $T(\bG;x,y)$  to coincide with $T(G;x,y)$ whenever $\bG$ is  plane. Thus we must have 
 \[a^*=x-1, \quad c^*=1, \quad a=y-1, \quad \text{and} \quad c=1. \]
 For $b^*$ and $b$ we recall that we need the value of $T(\bG;x,y)$ to be independent of the order upon which~\eqref{eq:tdd3} is applied to the edges of an embedded graph.
 Figure~\ref{fwda} shows a ribbon graph $\bG$ and Figure~\ref{fwdb} its dual $\bG^*$. (Note that in this example although  $\bG$ and $\bG^*$ are equivalent, $e$ is the orientable loop in $\bG$, but $e^*$ is the nonorientable loop in $\bG^*$.)   Figure~\ref{fwdc} show the result of computing $T(\bG;x,y)$ by applying~\eqref{eq:tdd3} to the edge $e$ then to the edge $f$.  Figure~\ref{fwdd}  shows the result of applying it to $f$ then to $e$.
 Thus we require that $(b^*)^2+ac=  a^*c^*+b^2$. Recalling our existing values of $a^*,c^*,a,c$ we see that taking $b^*=\sqrt{x-1}$ and  $b=\sqrt{y-1}$ will suffice for this example, and, as we shall presently see, is sufficient to ensure $T(\bG;x,y)$ is well-defined in general.
 
 \begin{figure}
     \centering

        \hfill
        \begin{subfigure}[c]{0.45\textwidth}
        \centering
  \labellist
\small\hair 2pt
\pinlabel $e$ at 19 158
\pinlabel $f$ at 170  117
\endlabellist
\includegraphics[width=30mm]{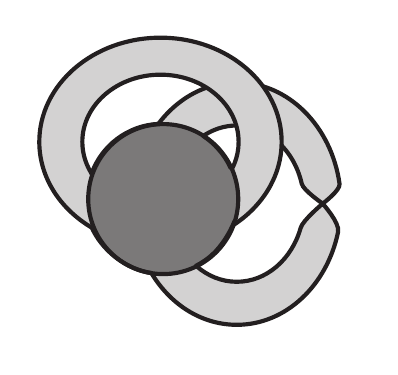}

        \caption{$\bG$.}
        \label{fwda}
     \end{subfigure}
        \hfill
        \begin{subfigure}[c]{0.45\textwidth}
        \centering
  \labellist
\small\hair 2pt
\pinlabel $e^*$ at 19 158
\pinlabel $f^*$ at 170  117
\endlabellist
\includegraphics[width=30mm]{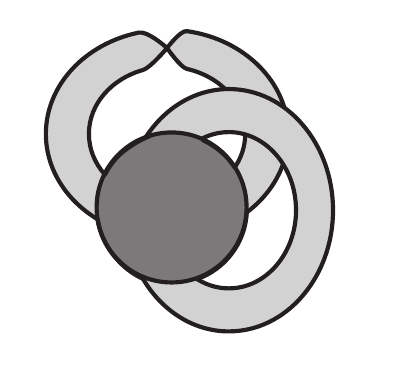}
        \caption{$\bG^*$.}
        \label{fwdb}
     \end{subfigure}
     
             \hfill
        \begin{subfigure}[c]{0.45\textwidth}
        \centering
  \labellist
\small\hair 2pt
\pinlabel $b^*$ at 140 412
\pinlabel $a$ at 375  412
\pinlabel $b^*$ at 44 168
\pinlabel $b$ at 175 168
\pinlabel $a^*$ at 341 168
\pinlabel $c$ at  510 168
\pinlabel $(b^*)^2$ at  43 24
\pinlabel $b^*b$ at 180  24
\pinlabel $a^*a$ at  356 24
\pinlabel $ac$ at  530 24
\pinlabel $+$ at  113 24
\pinlabel $+$ at  247 24
\pinlabel $+$ at  460 24
\endlabellist
\includegraphics[scale=0.25]{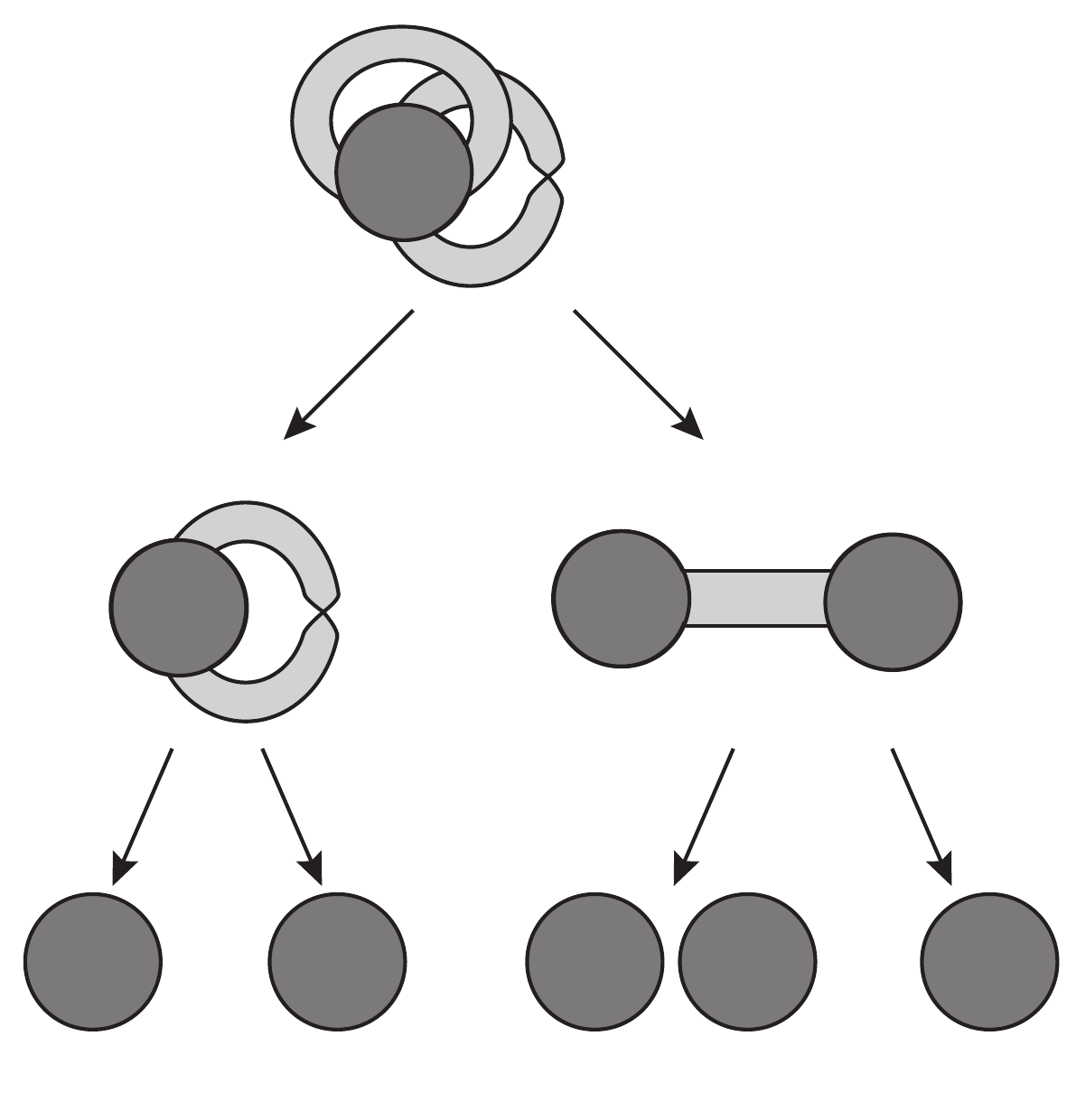}
        \caption{Applying to $e$ then $f$.}
        \label{fwdc}
     \end{subfigure}
        \hfill
        \begin{subfigure}[c]{0.45\textwidth}
        \centering
  \labellist
\small\hair 2pt
\pinlabel $a^*$ at 140 412
\pinlabel $b$ at 375  412
\pinlabel $c^*$ at 44 168
\pinlabel $a$ at 232 168
\pinlabel $b^*$ at 364 168
\pinlabel $b$ at  510 168
\pinlabel $a^*c^*$ at  43 24
\pinlabel $a^*a$ at 218  24
\pinlabel $b^*b$ at  371 24
\pinlabel $b^2$ at  505 24
\pinlabel $+$ at  107 24
\pinlabel $+$ at  313 24
\pinlabel $+$ at  438 24
\endlabellist
\includegraphics[scale=0.25]{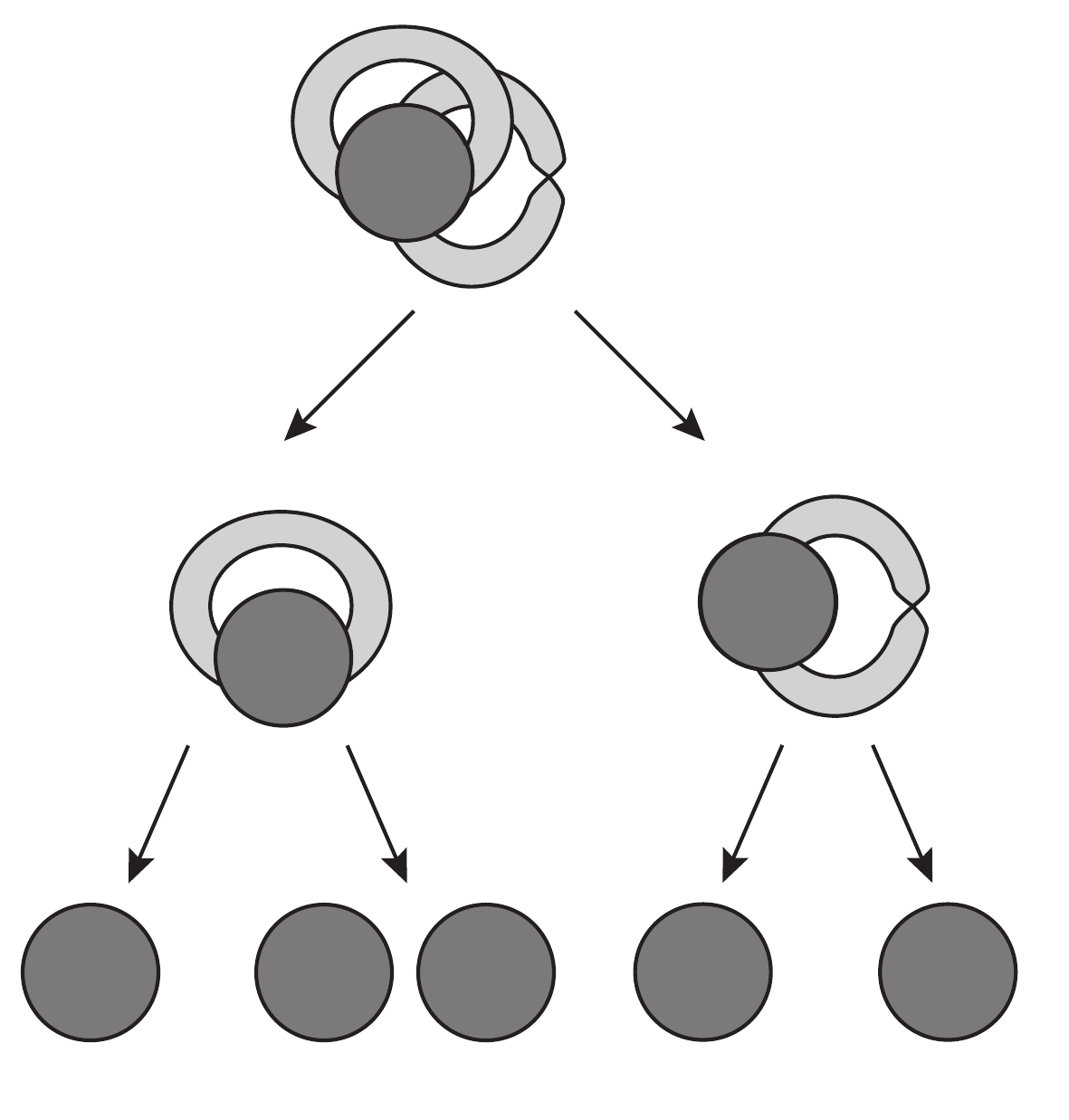}
        \caption{Applying to $f$ then $e$.}
        \label{fwdd}
     \end{subfigure}
     \caption{A computation using the cases in~\eqref{eq:tdd4}.}
\label{fwd}
\end{figure}

 \begin{definition}[Tutte polynomial --- third definition]\label{def3}
The \emph{Tutte polynomial}, $T(\bG;x,y)$, of  an embedded graph  $\bG$ is defined as
\begin{equation}\label{eq:tdd5}
T(\bG) = \begin{cases}  f(e) \cdot  T(\bG\ba e) +  g(e) \cdot  T(\bG / e)  ,  & \text{for an edge $e$,} 
\\1 & \text{if $\bG$ is edgeless,} \end{cases}
\end{equation}
where
\begin{equation}\label{eq:tdd6}
\begin{array}{l}
f(e) =\begin{cases} x-1 & \text{if $e^*$ is an orientable loop in $\bG^*$,} \\ 
\sqrt{x-1} & \text{if $e^*$ is a nonorientable loop in $\bG^*$,} \\ 
1 & \text{if $e^*$ is a not   loop in $\bG^*$;}  \end{cases}
\\ \text{and}\\
g(e) =\begin{cases} y-1 & \text{if $e$ is an orientable loop in $\bG$,} \\ 
\sqrt{y-1} & \text{if $e$ is a nonorientable loop in $\bG$,} \\ 
1& \text{if $e$ is  not a loop in $\bG$.}  \end{cases}
\end{array}
 \end{equation}
\end{definition}

 \begin{example}\label{examp6}
 Figure~\ref{fig:ztdc} provides an example of a computation of  $T(\bG;x,y)$ using Definition~\ref{eq:tdd5}. There $\bG$ consists of the graph consisting of one vertex and two edges embedded in the torus. For this $T(\bG;x,y) = (y-1)+2(x-1)(y-1)+(x-1)=2xy-x-y$.
 \end{example}
 
 \begin{figure}[ht]
     \centering
        \centering
       \labellist
\small\hair 2pt
\pinlabel $x-1$ at 180 436
\pinlabel $y-1$ at 452 435
\pinlabel $1$ at 61 175
\pinlabel $y-1$ at 234 175
\pinlabel $x-1$ at 386 175
\pinlabel $1$ at  564 175
\pinlabel $(x-1)$ at  52 18
\pinlabel $(x-1)(y-1)$ at  225 18
\pinlabel $(y-1)(x-1)$ at  400 18
\pinlabel $(y-1)$ at  550 18
\pinlabel $+$ at  125 18
\pinlabel $+$ at  318 18
\pinlabel $+$ at  499 18
\endlabellist
\includegraphics[scale=0.5]{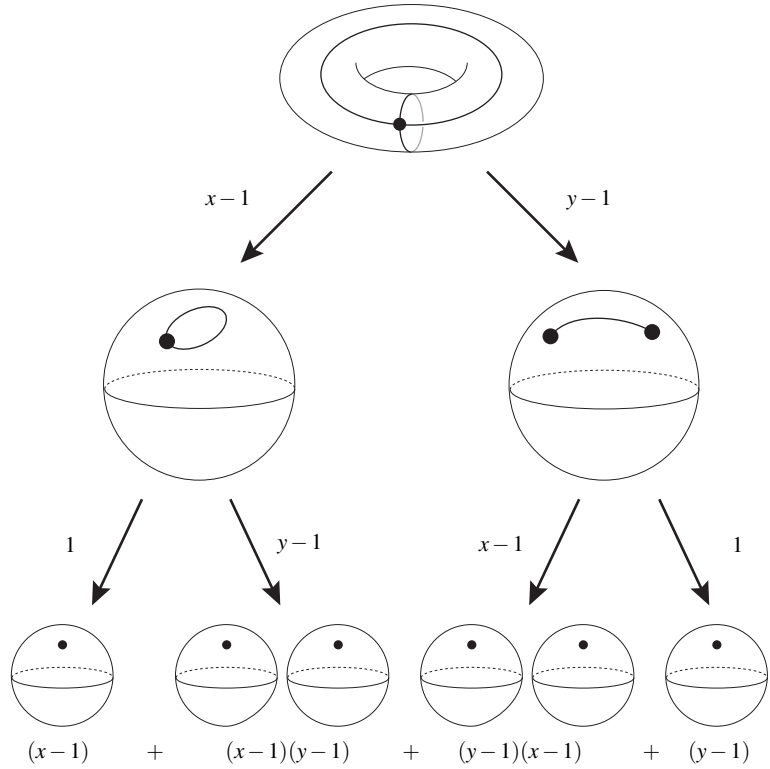}
     \caption{Deletion-contraction computations of $T(\bG;x,y)$.}
\label{fig:ztdc}
\end{figure}

A standard argument shows that the sum in Equation~\eqref{eq:tdef2} satisfies the definition-contraction relations in Definition~\ref{def3}, and consequently that this definition does indeed give a well-defined polynomial (see~\cite[Section~4.1]{HUGGETT_2019} for details). Thus Definitions~\ref{def:ts1},~\ref{def2}, and~\ref{def3} all result in the same polynomial $T(\bG;x,y)$. 

\medskip

The approach in this subsection also leads to a Universality Property from~\cite[Theorem~8]{Krajewski_2018} and~\cite[Theorem~33]{HUGGETT_2019}.

\begin{theorem}[Universality]\label{t:univ}
Let $\mathcal{G}$ be a minor-closed class of embedded graphs or ribbon graphs. Then there is a unique map $U: \mathcal{G}\rightarrow \mathbb{Z}[w, \sqrt{a^*}, \sqrt{c^*},\sqrt{a},\sqrt{c}]$ that satisfies

\[T(\bG) = \begin{cases}  f(e) \cdot  T(\bG\ba e) +  g(e) \cdot  T(\bG / e)  ,  & \text{for an edge $e$,} 
\\w^{v(\bG)} & \text{if $\bG$ is edgeless;} \end{cases}
\]
where 
\[\begin{array}{l}
f(e) =\begin{cases} a^* & \text{if $e^*$ is an orientable loop in $\bG^*$,} \\ 
\sqrt{a^*}\sqrt{c^*} & \text{if $e^*$ is a  nonorientable loop in $\bG^*$,} \\ 
c^* & \text{if $e^*$ is a  not a  loop in $\bG^*$;}  \end{cases}
\\ \text{and}\\
g(e) =\begin{cases} a & \text{if $e$ is an orientable loop in $\bG$,} \\ 
\sqrt{a}\sqrt{c} & \text{if $e$ is a nonorientable loop in $\bG$,} \\ 
c & \text{if $e$ is   not a loop in $\bG$,}  \end{cases}
\end{array}
\]
Moreover,
\[
U(\bG)=  
 w^{v(\bG)-\rho(\bG)}
c^{\rho(\bG)} 
(c^*)^{e(\bG)-\rho(\bG)} 
T
\left(\bG;\frac{wa^*+c}{c},\frac{wa+c^*}{c^*}\right).
\]
\end{theorem}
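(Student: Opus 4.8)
The plan is to prove both halves of the theorem — the existence-and-uniqueness of $U$, and the explicit formula relating $U$ to $T$ — by reducing everything to the single scaling relation that connects the weighted recursion to the unweighted one. First I would establish \emph{uniqueness}: granting that some $U$ satisfying the displayed recursion exists, the recursion expresses $U(\bG)$ as an explicit $\mathbb{Z}[w,\sqrt{a^*},\sqrt{c^*},\sqrt{a},\sqrt{c}]$-linear combination of the terminal values $w^{v(\bH)}$ over the edgeless graphs $\bH$ obtained by repeatedly applying deletion and contraction, so \emph{any} two functions satisfying it must agree; the only subtlety is that a priori the value could depend on the order in which edges are processed, but that is exactly what the explicit formula will certify. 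So the real content is to verify that the candidate
\[
U(\bG)\ :=\ w^{v(\bG)-\rho(\bG)}\,c^{\rho(\bG)}\,(c^*)^{e(\bG)-\rho(\bG)}\;T\!\left(\bG;\tfrac{wa^*+c}{c},\tfrac{wa+c^*}{c^*}\right)
\]
satisfies the recursion; this simultaneously gives existence, order-independence, and the explicit formula.

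The key step is a bookkeeping computation. With $x+1 = (wa^*+c)/c$ and $y+1 = (wa+c^*)/c^*$ — so $x = wa^*/c$ and $y = wa/c^*$ — I would substitute the Definition~\ref{def3} recursion for $T(\bG;x+1,y+1)$ into the candidate and track how the three prefactors $w^{v-\rho}$, $c^{\rho}$, $(c^*)^{e-\rho}$ change under $\bG \mapsto \bG\ba e$ and $\bG \mapsto \bG/e$. The relevant invariant identities are: $v(\bG\ba e) = v(\bG)$ always; $e(\bG\ba e) = e(\bG/e) = e(\bG)-1$; and, crucially, how $\rho$ and $v$ behave in each of the six cases (non-loop / orientable loop / nonorientable loop in $\bG$, crossed with the dual picture). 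Using $\rho(A) = \tfrac12(|A|+v(A)-b(A))$ and the local duality dictionary — $e$ is an (orientable / nonorientable) loop in $\bG$ exactly when contracting behaves as in Table~\ref{tablecontractrg}, and $e^*$ is a loop in $\bG^*$ exactly when deleting $e$ does or does not drop $b$ — one computes, case by case, the exact exponent shifts. For instance, when $e$ is not a loop in $\bG$ and $e^*$ is not a loop in $\bG^*$, contraction and deletion each change $\rho$ by the expected amount so that after cancelling prefactors the weighted recursion reduces to the unweighted $T(\bG) = T(\bG\ba e) + T(\bG/e)$ with the coefficients $f(e)=c^*$, $g(e)=c$ reproduced; when $e$ is an orientable loop in $\bG$ the $T(\bG/e)$ term picks up a factor $y = wa/c^*$, which combined with the prefactor change gives precisely $g(e)=a$; and the nonorientable-loop cases produce $\sqrt{y-1}$-type factors in $T$ that, against half-integer shifts in $\rho$, recombine into $\sqrt{a}\sqrt{c}$. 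The terminal case is immediate: if $\bG$ is edgeless then $e(\bG)=0$, $\rho(\bG)=0$, $v(\bG)=k(\bG)$, $T(\bG;\cdot,\cdot)=1$, and the prefactor collapses to $w^{v(\bG)}$.

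I would organise the case analysis into a short lemma recording, for each of the six configurations, the triple $(\Delta v, \Delta\rho, \Delta(e-\rho))$ under deletion and under contraction — this is the only place real work happens, and it is the step I expect to be the main obstacle, not because any individual computation is hard but because there are genuinely six loop/coloop-versus-dual combinations and the half-integer exponents coming from nonorientable loops demand care that the $\sqrt{\phantom{x}}$'s are being taken consistently (this is the same well-definedness point flagged after Definition~\ref{def2}, where $T(\bG;x,y)$ is only guaranteed to be a polynomial in $\sqrt{x-1},\sqrt{y-1}$). Once the lemma is in hand, substituting into the candidate, pulling the prefactor of $\bG$ outside, and matching coefficients against $f(e)$ and $g(e)$ is a direct verification in each case; since all six cases succeed, the candidate satisfies the recursion, hence $U$ exists, is order-independent, and equals the stated formula — and by the uniqueness argument above it is the unique such map on any minor-closed $\mathcal G$. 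A remark worth adding: the specialisation $w=1$, $a^*=x-1$, $c^*=1$, $a=y-1$, $c=1$ recovers Definition~\ref{def3} and the formula collapses to $U(\bG)=T(\bG;x,y)$, a useful sanity check that the normalisation is right.
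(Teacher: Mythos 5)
Your proposal is sound, but note first that the paper itself contains no proof of Theorem~\ref{t:univ}: it is stated as imported from \cite[Theorem~8]{Krajewski_2018} and \cite[Theorem~33]{HUGGETT_2019}, where universality is obtained from a Hopf-algebraic framework for canonical Tutte polynomials. So there is no in-paper argument to compare against, and your write-up has to stand alone --- which it does. Your route (uniqueness by induction on $e(\bG)$, using minor-closedness so that $\bG\ba e$ and $\bG\con e$ remain in $\mathcal G$; existence by verifying that the closed formula satisfies the weighted recursion) is the elementary, self-contained alternative to the cited sources, and it is exactly the weighted version of the ``standard argument'' the paper invokes without details to show that the state sum of Definition~\ref{def2} obeys the relations of Definition~\ref{def3}. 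The case lemma you defer to does close. Writing $P(\bG)=w^{v(\bG)-\rho(\bG)}c^{\rho(\bG)}(c^*)^{e(\bG)-\rho(\bG)}$ and using $\rho=\tfrac12(e+v-b)$, together with: contraction preserves $b$ and changes $v$ by $-1,+1,0$ according as $e$ is a non-loop, orientable loop, or nonorientable loop; deletion preserves $v$ and changes $b$ by $-1,+1,0$ according to the same trichotomy for $e^*$ in $\bG^*$; one finds $P(\bG)/P(\bG\con e)=c,\;c^*/w,\;\sqrt{cc^*/w}$ and $P(\bG)/P(\bG\ba e)=c^*,\;c/w,\;\sqrt{cc^*/w}$ respectively. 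Multiplying by the Definition~\ref{def3} coefficients after the substitution $x-1=wa^*/c$, $y-1=wa/c^*$ (namely $1$, $wa/c^*$, $\sqrt{wa/c^*}$ for contraction and $1$, $wa^*/c$, $\sqrt{wa^*/c}$ for deletion) returns exactly $c,a,\sqrt{ac}$ and $c^*,a^*,\sqrt{a^*c^*}$, and the terminal case gives $P=w^{v(\bG)}$ since $\rho=0$ for edgeless graphs. Note also that the coefficient of $T(\bG\ba e)$ depends only on the type of $e^*$ and that of $T(\bG\con e)$ only on the type of $e$, so the check is $3+3$ independent computations rather than a genuine $3\times 3$ grid. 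Two small points worth tightening: the ``order-independence'' worry belongs to existence, not uniqueness (uniqueness is an unconditional induction once an edge can be chosen); and your argument presupposes that $T(\bG;x,y)$ is already known to be well-defined and to satisfy Definition~\ref{def3}, which the paper establishes earlier --- if one wanted a fully independent proof one would instead verify the recursion directly against the state sum of Definition~\ref{def2}, which amounts to the same six exponent computations.
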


\begin{example}
If we take $w=v$, $\sqrt{a^*}=\sqrt{c^*}=1$ and $\sqrt{a}=\sqrt{c}=\sqrt{u}$ then the deletion-contraction relations in Theorem~\ref{t:univ} coincides with that for $Z(\bG;u,v)$ in Equation~\eqref{eq:z1}. It follows that
\begin{equation}\label{eq:tz1}
Z(\bG;u,v) = v^{v(\bG)-\rho(\bG)} u^{\rho(\bG)} \, T\Big(\bG; \frac{v+u}{u}, vu+1\Big).  \end{equation}
\end{example}

As a second example of the Universality Theorem we shall prove the duality relation for $T(\bG;x,y)$ discovered independently in~\cite{Ellis_Monaghan_2011} and~\cite{Moffatt_2008} .
\begin{theorem}[Duality]
Let $\bG$ be an embedded graph. Then
\[T(\bG^*;x,y) = T(\bG;y,x). \]
\end{theorem}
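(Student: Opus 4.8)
The plan is to deduce the duality relation from the Universality Theorem (Theorem~\ref{t:univ}), mirroring the way Equation~\eqref{eq:tz1} was obtained from it. I would work in the minor-closed class $\mathcal{G}$ of all ribbon graphs and define the map
\[ U(\bG) := T(\bG^*;y,x), \]
the Tutte polynomial of the geometric dual with the two variables interchanged. First I would assemble the standard facts about geometric duality for ribbon graphs (see~\cite{Ellis_Monaghan_2013}): $\bG^{**}=\bG$, the deletion--contraction identities $\bG^*\ba e = (\bG\con e)^*$ and $\bG^*\con e = (\bG\ba e)^*$, and the observation that if $\bG$ is edgeless then so is $\bG^*$, with $v(\bG^*)=v(\bG)$.

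Next I would show that $U$ satisfies the deletion--contraction recursion of Theorem~\ref{t:univ} for the parameter values $w=1$, $a^*=x-1$, $c^*=1$, $a=y-1$, $c=1$. To do this, apply Definition~\ref{def3} to the ribbon graph $\bG^*$ and evaluate the resulting polynomial identity at the point $(y,x)$; using $\bG^{**}=\bG$ to identify the loop types that control the coefficients, this reads
\[ U(\bG) = \hat f(e)\,T\big(\bG^*\ba e;y,x\big) + \hat g(e)\,T\big(\bG^*\con e;y,x\big), \]
where $\hat g(e)$ is $x-1$, $\sqrt{x-1}$, or $1$ according as $e^*$ is an orientable loop, a nonorientable loop, or not a loop in $\bG^*$, and $\hat f(e)$ is $y-1$, $\sqrt{y-1}$, or $1$ according as $e$ is an orientable loop, a nonorientable loop, or not a loop in $\bG$. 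Substituting the duality identities $\bG^*\ba e = (\bG\con e)^*$ and $\bG^*\con e = (\bG\ba e)^*$ converts this into
\[ U(\bG) = \hat g(e)\,U(\bG\ba e) + \hat f(e)\,U(\bG\con e). \]
With the chosen parameter values one checks directly that $\hat g(e)$ is exactly the coefficient $f(e)$ of Theorem~\ref{t:univ} (noting $\sqrt{a^*}\sqrt{c^*}=\sqrt{x-1}$) and $\hat f(e)$ is exactly its coefficient $g(e)$; the base case matches too, since $U(\bG)=T(\bG^*;y,x)=1=w^{v(\bG)}$ when $\bG$ is edgeless. As the recursion with these (specialised) coefficients determines its solution uniquely by induction on the number of edges, $U$ is the map furnished by Theorem~\ref{t:univ}. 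Feeding the parameter values into its closed formula then finishes the argument: since $w=c=c^*=1$, the three prefactors $w^{v(\bG)-\rho(\bG)}$, $c^{\rho(\bG)}$ and $(c^*)^{e(\bG)-\rho(\bG)}$ all equal $1$, while $(wa^*+c)/c = x$ and $(wa+c^*)/c^* = y$; hence $U(\bG)=T(\bG;x,y)$, i.e.\ $T(\bG^*;y,x)=T(\bG;x,y)$, which is the claimed identity after renaming the variables. (Since geometric duality of embedded graphs corresponds to that of ribbon graphs, the same holds for embedded graphs.)

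The step I expect to require the most care is the coefficient bookkeeping in the middle: one is performing two independent swaps at once --- interchanging $x$ with $y$ in the evaluation, and interchanging deletion with contraction under $\bG\mapsto\bG^*$ --- and must verify the three loop-type cases still line up. The subtle point is that orientability of a loop is \emph{not} preserved by duality (witness Figures~\ref{fwda}--\ref{fwdb}, where $e$ is the orientable loop of $\bG$ but $e^*$ is the nonorientable loop of $\bG^*$), so one must not try to match $f(e)$ with the loop type of $e$ in $\bG$; what makes the cases dovetail is that the coefficient in Definition~\ref{def3} applied to $\bG^*$ is, by design, indexed by the loop type of the dual edge in $(\bG^*)^*=\bG$, which is precisely the index of the universality coefficient $g$, so no genuine compatibility of loop types across duality is needed. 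One can also bypass Theorem~\ref{t:univ} entirely and argue straight from Definition~\ref{def2}: reindex the state sum for $\bG^*$ by complementation $A\mapsto E\setminus A$, use $b_{\bG^*}(E\setminus A)=b_{\bG}(A)$ and $v(\bG^*)=b_{\bG}(E)$ to check $\rho^*(E)-\rho^*(E\setminus A)=|A|-\rho(A)$ and $|E\setminus A|-\rho^*(E\setminus A)=\rho(E)-\rho(A)$, and read off $T(\bG^*;x,y)=T(\bG;y,x)$ directly.
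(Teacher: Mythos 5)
Your main argument is essentially the paper's own proof: both apply the deletion--contraction recursion (Definition~\ref{def3}) to $\bG^*$, use $\bG^*\ba e^*=(\bG\con e)^*$ and $\bG^*\con e^*=(\bG\ba e)^*$ to swap the roles of deletion and contraction, match the coefficients via $\bG^{**}=\bG$, and conclude by the Universality Theorem --- you merely place the variable swap in the definition of the auxiliary map rather than at the end, and you are more explicit about the specialisation $w=c=c^*=1$, $a^*=x-1$, $a=y-1$. The coefficient bookkeeping and the base case check out, and your closing state-sum aside via $A\mapsto E\setminus A$ and $b_{\bG^*}(E\setminus A)=b_{\bG}(A)$ is a correct alternative route, but the proof you actually give coincides with the paper's.
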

\begin{proof}
Define a function $T^*$ from embedded graphs to polynomials in variables $\sqrt{x-1}$ and $\sqrt{y-1}$ by setting $T^*(\bG) =T(\bG^*)$. 
By Definition~\ref{def3}, 
\begin{align*} T^*(\bG;x,y)=T(\bG^*;x,y) &= f(e^*) T(\bG^*\ba e^*;x,y)+g(e^*) T(\bG^* / e^*;x,y)
\\ &= f(e^*) T((\bG/ e)^*;x,y)+g(e^*) T((\bG \ba e)^*;x,y)
\\&= g(e^*) T^*(\bG\ba e;x,y) + f(e^*) T^*(\bG/ e;x,y),
\end{align*}
where we have used that $\bG^*/e^*=(\bG\ba e)^*$ and $\bG^*\ba e^*=(\bG\con e)^*$ (see, for example~\cite[Section~1.9]{zbMATH05569114}, or~\cite[Section~4.2]{Ellis_Monaghan_2013}) for the second equality.
By rewriting   $f(e^*)$ and $g(e^*)$ in terms of $e$ 
we see that   $T^*(\bG;x,y)$ satisfies the deletion-contraction relations for $T(\bG;y,x)$. 
(For example, $f(e^*)=x-1$ if $(e^*)^*$ is an orientable loop in $(\bG^*)^*$, which, as duality is involutory, happens  if $e$ is an orientable loop in $\bG$.)
It follows from the Universality Theorem that $T^*(\bG;x,y)=T(\bG;y,x)$ and so $T(\bG^*;x,y) = T(\bG;y,x)$.\qed
\end{proof}
 It is worth highlighting that $T(\bG^*;x,y) = T(\bG;y,x) $  extends the classical result  that for planar graphs $T(G;x,y)=T(G^*;y,x)$ to non-planar graphs and non-plane duals.

\medskip

We say that a ribbon graph $\bG$ is the  \emph{join} of ribbon graphs $\bG'$ and $\bG''$, written $\bG' \vee \bG''$, if $\bG$ can be obtained by identifying an arc on the boundary of a vertex of  $\bG'$ with an arc on the boundary of a vertex of $\bG''$. (The two arcs cannot intersect edges.) The two  vertices with identified arcs make a single vertex of $\bG$. (See, for example, \cite{Ellis_Monaghan_2013,Moffatt_2012} for an elaboration of this operation.) A description of this process in terms of embedded graphs can be found in the first paragraph of \cite[Section~3.5.2]{zbMATH04006288}.

By applying deletion and contraction to all of the edges in $\bH$ before the edges in $\bG$ we have that
\begin{equation}\label{eq:join1}
T(\bG\sqcup \bH;x,y) =  T(\bG;x,y)\cdot T(\bH;x,y) = T(\bG\vee \bH;x,y),
\end{equation}
and, by making use of Equation~\eqref{eq:tz1}, that 
\begin{equation}\label{eq:join2}
Z(\bG\sqcup \bH;u,v) =  Z(\bG;u,v)\cdot Z(\bH;u,v) = v\cdot Z(\bG\vee \bH;u,v).
\end{equation}

\subsection{Quasi-trees}\label{sec:qt}
In this section we show that $T(\bG;x,y)$ can be written as a sum over spanning quasi-trees, a result that is analogous the fact that the classical Tutte polynomial of a connected graph can be written as a sum over spanning trees. 
An activities expansion for the Bollob\'as--Riordan polynomial of orientable ribbon graphs was given  in~\cite{zbMATH05960754}. This was quickly extended to non-orientable ribbon graphs in~\cite{Vignes_Tourneret_2011} and, independently, in an unpublished manuscript~\cite{dewey}. 
An extension to the  Krushkal polynomial followed in~\cite{Butler_2018}. Each of these expansions expresses the graph polynomial as a sum over quasi-trees, but they all include a Tutte polynomial of an associated graph as a summand. Most recently,~\cite{Morse_2019} gave a spanning tree expansion for the 2-variable Bollob\'as--Riordan polynomial  of a delta-matroid which specialised to give one for the 2-variable Bollob\'as--Riordan polynomial of a ribbon graph. The approach we take here is a special case of~\cite[Section~5]{HUGGETT_2019} which gives expressions for the Tutte polynomials of several types of topological graph.

\medskip

Equations~\eqref{eq:tdc} and~\eqref{eq:tsum} gave the two most common definitions of the Tutte polynomial of a graph. However there is a third standard definition, often called the \emph{spanning tree expansion} or \emph{activities expansion}, that expresses the Tutte polynomial of a  graph as a sum over spanning trees. 
For completeness we include a description of the definition; however, a reader unfamiliar with it may safely skip the details in the next    paragraph.

Let $G$ be a connected graph and fix a linear order of its edges. 
Suppose that $T$ is a  spanning tree of $G$ and let $e$ be an edge of $G$.
If $e\notin T$, then the graph $T\cup e$ contains a unique cycle, and we say that $e$ is \emph{externally active} with respect to $T$ if it is the smallest edge in this cycle.
If $e\in F$, then we say that  $e$ is \emph{internally active} if $e$ is the smallest edge of $G$ that can be added to $T\ba e$ to recover a spanning tree of $G$.
Then the  \emph{spanning tree expansion} of the Tutte polynomial is
\begin{equation}\label{eq:tact}
T(G;x,y) = \sum_{T\in \mathcal{T}} x^{IA(T)}y^{EA(T)},
\end{equation}
where $\mathcal{T}$ is the set of spanning trees of $G$, and where $\mathrm{IA}(T)$ (respectively $\mathrm{EA}(T)$) denotes the number internally active (respectively, externally active) edges of $G$ with respect to $T$ and the linear ordering of $E$. (Additional details and examples can be found in, for example, \cite[Section~2.2.3]{Ellis_Monaghan_2022_C}).
We can obtain a similar expression for the Tutte polynomial of an embedded graph, but instead of summing over spanning trees we sum over spanning quasi-trees as follows.

 \medskip

For convenience, in the rest of  subsection we shall work in the language of ribbon graphs and restrict our discussions to connected ribbon graphs.

We begin by rewriting Equation~\eqref{eq:zdc}.
An edge in a ribbon graph is a \emph{bridge} if deleting it increases the number of components of the ribbon graph. 
A loop $e$  in a ribbon graph is said to be \emph{interlaced} with a cycle $C$ if when travelling around the boundary of the vertex incident to $e$ we see edges in the cyclic order $e \, c_1\, e \,c_2$ where $c_1$ and $c_2$ are edges of $C$.
A loop is \emph{trivial} if it is not interlaced with any cycle. 
With this terminology we can use  Equation~\eqref{eq:join2} to rewrite Equation~\eqref{eq:zdc} as
\begin{equation}\label{eq:zdc2}
Z(\bG;u,v) =\begin{cases}  
  (1+uv)  Z(\bG\ba e;u,v )  &  \text{if $e$ is a trivial orientable loop,}\\
  (u+v)\, Z(\bG\con e;u,v  ) &  \text{if $e$ is a bridge,}\\
 Z(\bG\ba e;u,v ) + u\, Z(\bG\con e;u,v  ) &  \text{if $e$ is not a bridge}
 \\
&  \qquad\text{nor a trivial orientable loop,}\\
    v^{|V|} &  \text{if }E(G)=\emptyset.
 \end{cases} \end{equation}
Notice that if $\bG$ connected every ribbon graph arising in a computation of $Z(\bG;u,v)$ using~\eqref{eq:zdc2} is also connected.

We consider \emph{resolution trees} for the computation of $Z(\bG;u,v)$ via the deletion-contraction relations in~\eqref{eq:zdc2}. An example of one  is given in Figure~\ref{gsdh}.  The figure may suffice as an alternative to the formal definition that follows.
Given a ribbon graph $\bG$ choose a linear order on its edge set.
The resolution tree associated with this linear order is a rooted tree  of height $e(\bG)$ whose nodes are the minors of $\bG$ that appear in the computation of $Z(\bG;u,v)$ undertaken with respect to the linear order. The node of height $e(\bG)$ is  $\bG$. 
Each node of height $i>0$ gives rise to one or two nodes of height $i-1$ as follows. 
Suppose that $\bH$ is a node of height $i$ and $e$ is the $i$-th edge.
If $e$  is a bridge, then insert a node $\bH/e$ at height $i-1$ and a tree-edge labelled $(u+v)$ is added between the nodes $\bH$ and $\bH/e$. 
If $e$  is  a trivial orientable  loop, then insert a node $\bH\ba e$ at height $i-1$ and a tree-edge labelled $(1+uv)$ is added between the nodes $\bH$ and $\bH\ba e$. 
Otherwise insert both $\bH\ba e$ and $\bH/ e$ as nodes of height $i-1$, and add a tree-edge labelled $1$  between the nodes $\bH$ and $\bH\ba e$, and a tree-edge labelled $u$  between the nodes $\bH$ and $\bH/ e$. 
The \emph{root} is the node at height $e(\bG)$ corresponding to the original graph. 
A \emph{branch} is a path from the root to a leaf. 
The leaves are of height 0, and each is a ribbon graph consisting of a single vertex.

\begin{figure}[ht]
\centering
\labellist
\small\hair 2pt
\pinlabel $3$ at 311 620
\pinlabel $2$ at 162 735
\pinlabel $1$ at 363 770
\pinlabel $1$ at 181 617
\pinlabel $u$ at 440 617
\pinlabel $1+uv$ at  60 370
\pinlabel $1$ at  410 370
\pinlabel $u$ at  640 370
\pinlabel $u+v$ at  60 130
\pinlabel $1$ at  340 130
\pinlabel $u$ at  530 130
\pinlabel $u+v$ at  740 130
\endlabellist
\includegraphics[scale=0.3]{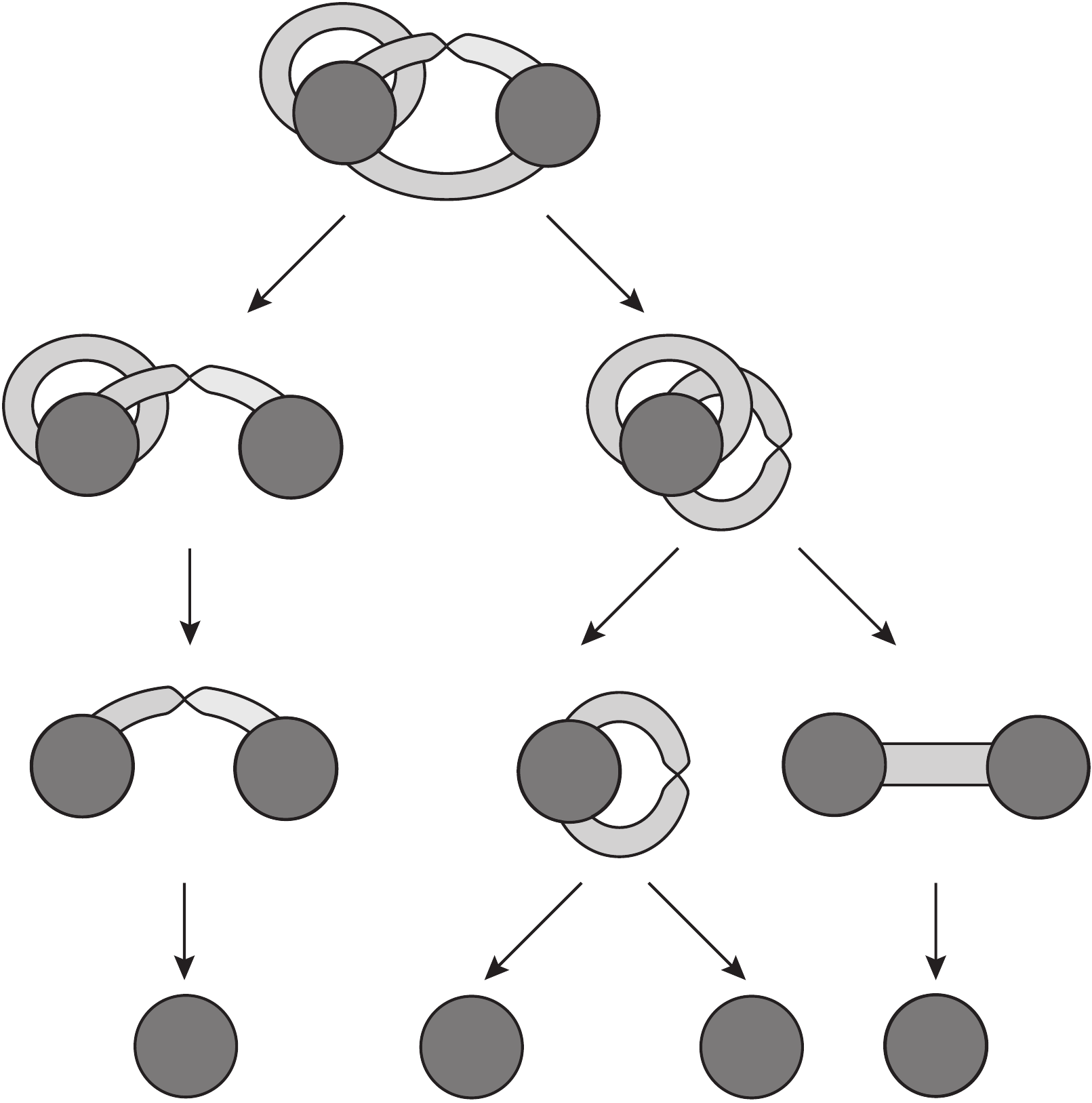}
\caption{A resolution tree.}
\label{gsdh}
\end{figure}

Observe that $Z(G;u,v)$ is obtained from a resolution tree by, for each branch, taking the product of the tree-edge labels on that branch and multiplying this by $v$ (the contribution of the isolated vertex at the leaf), then summing together all the resulting terms.
For example,  Figure~\ref{gsdh} gives
\begin{align*}
Z(G;u,v) &= v(1\cdot(1+uv)\cdot(u+v)) +v(u\cdot 1 \cdot 1)+v(u\cdot 1 \cdot u)+v(u\cdot u \cdot (u+v))
\\&=u^{3} v +2 u^{2} v^{2}+u \,v^{3}+ u^{2}\,v+2 u v +v^{2}
.\end{align*}

\medskip

The branches of the resolution tree correspond to spanning quasi-trees.
A ribbon graph is a \emph{quasi-tree} if it has exactly one boundary component. A ribbon subgraph of $\bG$ is \emph{spanning} if it contains each vertex of $\bG$. Note that a genus 0 quasi-tree is a tree.

\begin{lemma}\label{zxzc}
If $\bG$ is a connected  ribbon graph then the set of spanning quasi-trees of $\bG$ is in one-one correspondence with the set of leaves of the resolution tree of $Z(\bG;u,v)$ computed from Equation~\eqref{eq:zdc2}. Furthermore, the correspondence is given by deleting the set of edges of $\bG$ that are deleted in the branch that terminates in the node. 
\end{lemma}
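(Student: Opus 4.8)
The plan is to establish the bijection by tracking what happens along each branch of the resolution tree and showing that the spanning ribbon subgraph obtained by deleting exactly the edges deleted along that branch is always a quasi-tree, and conversely that every spanning quasi-tree arises this way exactly once. First I would set up notation: fix the linear order on $E(\bG)$ used to build the resolution tree, and for a leaf $\ell$ let $D(\ell)\subseteq E$ be the set of edges deleted along the branch from the root to $\ell$, and $C(\ell)=E\setminus D(\ell)$ the set of edges contracted along that branch. The claimed correspondence sends $\ell$ to the spanning ribbon subgraph $\bG\ba D(\ell)=(V,C(\ell))$. Since at each node of the resolution tree every edge is either deleted or contracted (and, crucially, trivial orientable loops are deleted while bridges are contracted), distinct leaves give distinct sets $D(\ell)$ — the branch is literally encoded by the sequence of delete/contract choices — so the map $\ell\mapsto D(\ell)$ is injective, and it remains to show its image is exactly the set of edge-sets whose complement spans a quasi-tree.

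The key structural step is the following invariant, proved by induction down the tree: at every node $\bH$ of the resolution tree, $\bH$ is obtained from the corresponding spanning ribbon subgraph of $\bG$ by contracting the edges already contracted along that branch, and $b(\bH)$ records whether the ``surviving'' spanning subgraph is en route to being a quasi-tree. More usefully, I would track the quantity $b$ of the spanning ribbon subgraph $(V,C)$ where $C$ is the set of edges not-yet-deleted (i.e., contracted-or-still-present). The point is that in Equation~\eqref{eq:zdc2}, deleting a trivial orientable loop $e$ decreases $b$ by $1$ (by Equation~\eqref{eq:join2} / the join structure: a trivial orientable loop contributes a separate boundary component), whereas contracting a bridge leaves $b$ unchanged, and in the generic case the delete-branch and contract-branch split off the two local possibilities. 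A cleaner route: use that the leaf is a single vertex with one boundary component, and argue by reverse induction up the branch that $b\big((V,C(\ell))\big)=1$; one shows each tree-edge step, when read from the child back to the parent, either adds a contracted edge that keeps $b=1$ or, for the trivial-orientable-loop case, the parent's spanning subgraph has the loop added back and its $b$ goes up — but that loop was deleted, so it is not in $C(\ell)$, hence $C(\ell)$ is unaffected. Thus $C(\ell)$ equals the set of edges surviving at the leaf together with... wait — more carefully, $C(\ell)$ is exactly the set of edges contracted along the branch, and one shows by induction that $\bG\ba D(\ell)\con C(\ell)$ is the leaf (a single vertex, one boundary component) while the deletions only ever removed trivial-orientable-loop edges relative to their local subgraph; from this and the behaviour of $b$ under contraction (contraction never changes the number of boundary components) one concludes $b\big(\bG\ba D(\ell)\big)=b(\text{leaf})=1$, i.e. $\bG\ba D(\ell)$ is a spanning quasi-tree.

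For surjectivity, given a spanning quasi-tree $Q=(V,C)$ of $\bG$, I would show that the branch determined by ``delete $e$ iff $e\notin C$'' is actually a valid branch of the resolution tree, i.e. that whenever the algorithm is forced to contract (a bridge) the relevant edge lies in $C$, and whenever it is forced to delete (a trivial orientable loop) the edge lies outside $C$. This is where the main obstacle lies: one must check that in the ribbon graph $\bH$ reached after processing the first $i-1$ edges according to this rule, if the $i$-th edge $e$ is a bridge of $\bH$ then $e\in C$ (a bridge of a minor of $\bG$ must be in every spanning quasi-tree, since deleting it would disconnect and hence increase boundary components past $1$), and if $e$ is a trivial orientable loop of $\bH$ then $e\notin C$ (keeping it would give a subgraph with an extra boundary component, contradicting $Q$ being a quasi-tree after the remaining contractions). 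The delicate point is relating ``bridge/trivial-orientable-loop of the minor $\bH$'' to ``forced membership in $C$ for the quasi-tree $Q$ of the original $\bG$'', which requires knowing that spanning quasi-trees of $\bG$ correspond to spanning quasi-trees of minors in the expected way — this can be extracted from Equations~\eqref{eq:join1}--\eqref{eq:join2} and the boundary-component bookkeeping, or cited from \cite[Section~5]{HUGGETT_2019}. Once validity of the branch is established, its leaf $\ell$ satisfies $D(\ell)=E\setminus C$, so $Q=\bG\ba D(\ell)$, giving surjectivity and completing the bijection; the final sentence of the statement, that the correspondence is ``delete the edges deleted along the branch'', is then exactly the map we defined.
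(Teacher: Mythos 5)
Your proposal follows essentially the same route as the paper's sketch: identify the leaf of a branch with $\bG\ba D\con C$ via commutativity of deletion and contraction, and use the fact that contraction preserves the number of boundary components to conclude $b(\bG\ba D)=1$, so each leaf yields a spanning quasi-tree. The one loose end you flag --- verifying that the branch determined by a given spanning quasi-tree is a valid branch --- closes most cleanly not by relating quasi-trees of $\bG$ to quasi-trees of its minors, but by noting that deletion and contraction never decrease the number of connected components, so the leaf $\bG\ba D\con C$ being a single vertex forces every intermediate minor to be connected, hence no step along that branch deletes a bridge or contracts a trivial orientable loop; this is precisely the observation with which the paper's sketch opens.
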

\begin{proof}[Sketch]
If you start with a connected ribbon graph the only way you can create an additional component is by deleting a bridge or contracting a trivial orientable loop. Thus the branches in a resolution tree correspond to every possible way to delete and contract edges without creating additional components. Next, since the order of deletion and contraction of edges does not matter, the ribbon graph at leaf is a vertex obtained as $\bG\ba D /C$, for some sets of edges $C$ and $D=E-C$. Since contraction does not change the number of boundary components, it follows that $\bG\ba D$ has exactly one boundary component and is hence a spanning quasi-tree of $\bG$. The 1-1 correspondence follows. (A more detailed proof can be found just after~\cite[Lemma~46]{HUGGETT_2019}.)\qed
\end{proof}

In light of Lemma~\ref{zxzc} we see that if we can read off the product of the labels on a branch directly from its corresponding spanning quasi-tree then we can express $Z(G;u,v)$ as a sum over spanning quasi-trees. 
The edges of the given spanning quasi-tree correspond to those edges contracted along the branch, and edges outside the quasi-tree are deleted. Thus to establish how much the branch contributes, we just need to know how many edges we see as a bridge and how many edges we see as a trivial orientable loop when we delete and contract edges as specified by the branch.

Recall that the edges of $\bG$ are ordered. Label the edges of $\bG$ according to this ordering.
Consider a spanning quasi-tree $\bT$ as sitting inside $\bG$. 
Arbitrarily orient the boundary of each edge (so there is a preferred direction of travel around an edge.)
Next travel round the boundary of $\bT$ and read off the edge-names as we encounter them. This gives a double-occurrence word $\omega$ in the edge-names.  
As we travel round the boundary of $\bT$ we meet each edge twice. Furthermore, we meet it in a direction that is consistent with the edge-boundary orientation or inconsistent with the edge-boundary orientation.
If we  meet the edge $i$ once consistently and once inconsistently then put a bar over the two occurrences of $i$ in $\omega$. 

\begin{example}\label{eq:act1}
Consider the ribbon graph in Figure~\ref{fact1}. The edge set $\{3,5,6,7\}$ defines one of its spanning quasi-trees. This is shown  by the darker edges in  Figure~\ref{fact2}. The arrows indicate the edge orientations. We have $\omega=135\bar{7}\bar{8}\bar{6}\bar{4}\bar{7}\bar{8}\bar{6}53\bar{4}\bar{2}1\bar{2}$ (up to cyclic permutations and reversals).
\end{example}

\begin{figure}
     \centering

        \hfill
        \begin{subfigure}[c]{0.45\textwidth}
        \centering
        \labellist
\small\hair 2pt
\pinlabel {$1$}  at  106 228
\pinlabel {$2$}   at    30 178
\pinlabel {$3$}  at    144 186
\pinlabel {$4$}   at   83 121
\pinlabel {$5$}   at     146 79
\pinlabel {$6$}  at   165 106 
\pinlabel {$7$}   at    176 13
\pinlabel {$8$}   at  250 99
\endlabellist
\includegraphics[scale=.55]{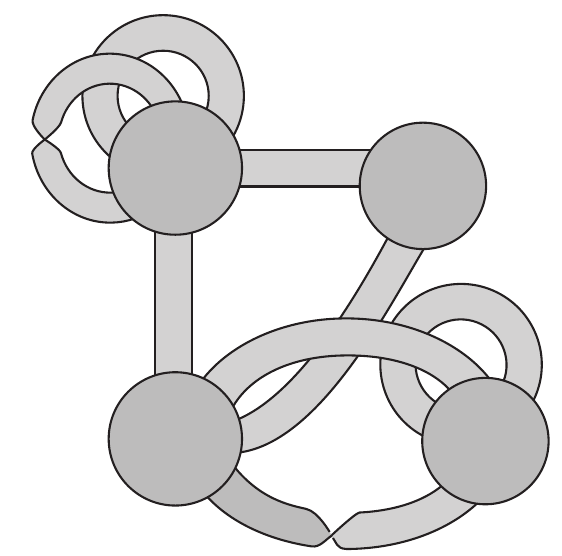}
        \caption{$\bG$.}
        \label{fact1}
     \end{subfigure}
        \hfill
        \begin{subfigure}[c]{0.45\textwidth}
        \centering
        \labellist
 \small\hair 2pt
\pinlabel {$1$}  at  106 228
\pinlabel {$2$}   at    30 178
\pinlabel {$3$}  at    163 186
\pinlabel {$4$}   at   83 121
\pinlabel {$5$}   at     146 79
\pinlabel {$6$}  at   165 106 
\pinlabel {$7$}   at    176 13
\pinlabel {$8$}   at  250 99
\endlabellist
\includegraphics[scale=.55]{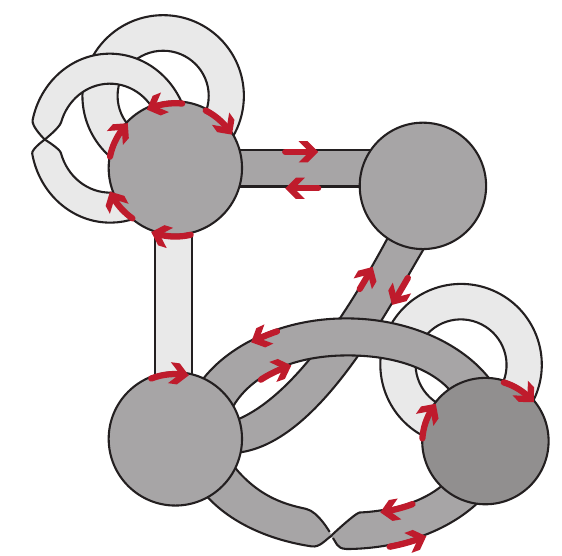}
        \caption{A spanning quasi-tree in bold.}
        \label{fact2}
     \end{subfigure}
     \caption{Computing $\omega$ from a spanning quasi-tree.}
\label{fact}
\end{figure}

Two labels $i$ and $j$ are \emph{interlaced} in $\omega$ if we see they appear in the cyclic order $\cdots i\cdots j\cdots i\cdots j$, with or without bars.
We say that an edge $i$ is \emph{live} in $\omega$ if it is not interlaced with a lower order edge. 

For example, with $\omega$ as in Example~\ref{eq:act1}, edges 1 and 3 are live, while the remaining  edges are not.

It is not too hard to see that:
\begin{itemize}
\item  $i$ is in $\bT$ and live if and only if it is a bridge or nonorientable trivial loop in the height $i$ node of the branch of the resolution tree corresponding to $\bT$.
\begin{itemize}
\item  If in addition $i$ appears in $\omega$ with bars then it is a nonorientable trivial loop. 
\item If in addition $i$ appears in $\omega$ without bars then it is a bridge. 
\end{itemize}
\item $i$ is not in $\bT$ and live  if and only if it is a trivial loop in the height $i$ node of the branch of the resolution tree corresponding to $\bT$.
\begin{itemize}
\item  If in addition $i$ appears in $\omega$ with bars then it is a nonorientable trivial loop. 
\item If in addition $i$ appears in $\omega$ without bars then it is a orientable trivial loop. 
\end{itemize}
\end{itemize}

Thus if we let $ILO(\bT)$ denote the number of live edges in $\bT$ that appear in $\omega$ without bars, and we let 
$ELO(\bT)$ denote the number of live edges that are not in $\bT$ and appear in $\omega$ without bars, then 
\begin{equation}\label{eq:qtree}
Z(\bG;u,v) = v\, \sum_{\bT\in \mathcal{Q}(\bG)}  u^{e(\bT)} (1+u^{-1}v)^{ILO(\bT)}(1+uv)^{ELO(\bT)}  , 
\end{equation}
where $\mathcal{Q}(\bG)$ is the set of spanning quasi-trees in $\bG$.

\begin{example}
Let $\bG$ be the ribbon graph in Figure~\ref{gsdh} with edge set $\{1,2,3\}$. This has four spanning quasi-trees given by the edge sets $\{1\}$, $\{3\}$, $\{1,3\}$, $\{1,2,3\}$. 
For $\{1\}$, we have $\omega=12\bar{3}21\bar{3}$ and edges $1$ and $2$ are live. Thus this spanning tree contributes  $u (1+u^{-1}v)(1+uv)$ to the sum in~\eqref{eq:qtree}.
For $\{3\}$, we have $\omega=\bar{1}32\bar{1}23$ and only the edge $1$ is live. Thus this spanning tree contributes  $u$ to the sum in~\eqref{eq:qtree}.
For $\{1,3\}$, we have $\omega=\bar{1}\bar{2}\bar{3}\bar{1}\bar{2}\bar{3}$ and only the edge  $1$ is live. Thus this spanning tree contributes  $u^2$ to the sum in~\eqref{eq:qtree}.
For $\{1,2,3\}$, we have $\omega=1\bar{2}1{3}\bar{2}{3}$ and only the edge  $1$ is live. Thus this spanning tree contributes  $u^3(1+u^{-1}v)$ to the sum in~\eqref{eq:qtree}.
Thus 
\begin{align*}
Z(\bG;u,v) &= v(u (1+u^{-1}v)(1+uv)+u+u^2+ u^3(1+u^{-1}v) )
\\&=u^{3} v +2 u^{2} v^{2}+u \,v^{3}+u^{2}\,v+2 u v +v^{2}
.\end{align*}

\end{example}

\subsection{Further properties}\label{sec:further}

We conclude by listing some key properties of the Tutte polynomial of embedded graphs. 

\begin{enumerate}

\item $T(\bG;x,y)$ is determined by the delta-matroid of $\bG$. See~\cite{CMNR-JCT}. This is analogous to the well-known fact that the classical Tutte polynomial of a graph is determined by the cycle matroid of the graph. 

\item If $\bG$ is a plane embedding of $G$, then $T(\bG;x,y)=T(G;x,y)$. See~\cite{BR01,BR02}.

\item  If $\bG$ is an embedded graph and $H$ a graph such that  $\R(\bG;x,y)=T(H;x,y)$, then $\bG$ is plane. See~\cite[Corollary~3.11]{Ellis_Monaghan_2022_B}. 

\item Let $\bG$ be an orientable embedded graph. Then $T(\bG;x,y)$ is irreducible over $\mathbb{Z}[x,y]$ (or $\mathbb{C}[x,y]$) if and only if $\bG $ is non-separable. See~\cite[Theorem~1.1]{Ellis_Monaghan_2022_B}.

\item $T(\bG\sqcup \bH;x,y) =  T(\bG;x,y)\cdot T(\bH;x,y) = T(\bG\vee \bH;x,y)$. (See~\cite{BR01,BR02}.)
\item $T(\bG^*;x,y)=T(\bG;y,x)$. See~\cite{Ellis_Monaghan_2011,Moffatt_2008}. This expression was extended to a partial duality formula for a multivariate version of the dichromatic polynomial in~\cite{zbMATH05569114,Moffatt_2010_PDBR,Vignes_Tourneret_2009}. 

\item An analogue of the classical result that two graphs related by Whitney flips have the same Tutte polynomial holds (see, for example, \cite[Theorem~6.14]{zbMATH07680519} for details on this classical result).  
If two embedded graphs are related by vertex joins, vertex cuts, or mutation, as defined in~\cite{zbMATH07336895}, then their Tutte polynomials are identical. See~\cite[Corollary~3]{zbMATH07336895} for details.

\item A expression for the Tutte polynomial of the $k$-sum of two embedded graphs in terms of polynomials associated with its two $k$-summands was given in~\cite[Theorem~1]{HMsplit}. This is an analogue of Negami's splitting formula for the classical Tutte polynomial of a graph~\cite{Negami_1987}. 

\item An expression for $T(\bG\otimes \bH;x,y)$  in terms of $T(\bG;x,y)$,  $T(\bH\ba e;x,y)$ and $T(\bH/e;x,y)$ was given in~\cite[Corollary~4.4]{zbMATH05999807} and~\cite[Corollary~6.3]{ELLIS_MONAGHAN_2014}. 
This is an  analogue of Brylawski's tensor product formula for the classical Tutte polynomial of a graph from~\cite{Brylawski_2010}.

\item A convolution product formula, which for orientable embedded graphs states that 
\[ T(\bG;x,y) =\sum_{A\subseteq E} T(\bG \ba (E-A) ; 0,y ) \cdot  T(\bG / A ; x,0 ) , \]
was given in~\cite[Theorem~10]{Krajewski_2018}.

\item
$T(\bG;x,y)$ determines the following (see~\cite[Corollary~3.3]{Ellis_Monaghan_2022_B}).
\begin{enumerate}
\item The number of edges of $\bG$.
\item When $\bG$ is connected, the number of spanning quasi-trees of $\bG$ with a given number of edges.
\item The ranks $r(\bG)$ and $r(\bG^*)$.
\item The Euler genus $\gamma(\bG)$ 
of $\bG$.
\item  Whether or not $\bG$ is plane.
\item Whether or not $\bG$ is orientable.
\item The  genus $g(\bG)$.
\end{enumerate} 

\item 
For a ribbon graph $\bG$ with underlying graph $G$,  let $H$ be the underlying graph of $\bG^*$. Each of the following hold. (See~\cite[Theorem 6.6]{CMNR-JCT}).
\begin{enumerate}
    \item $T(\bG;0,0)$ is $1$  if $\bG$ is trivial and 
    $0$ otherwise.
    \item $T(\bG;1,1)=0$ unless $\bG$ is plane, in which case it equals the number of maximal spanning forests of $G$ (spanning trees if $G$ is connected).
    \item $T(\bG;1,2)$ is the number of spanning forests in $H$.
    \item $T(\bG;2,1)$ is the number of spanning forests in $G$. 
    \item $T(\bG;2,2)=2^{|E|}$.
\end{enumerate}

\item An expression for  $T(\bG;3,3)$ in terms of claw coverings  was given in ~\cite[Theorem~9]{KP03}, and in terms of Tetromino tilings in~\cite[Corollary~11]{KP03}.

\item For $k\in \mathbb{N}$,   $T(\bG;k+1,k+1)$ can be expressed in terms of edge colourings in medial graphs. See~\cite[Theorem~5]{KP03}. This was extended in~\cite[Evaluation 5]{ELLIS_MONAGHAN_2014} to an interpretation of $T\Big(\bG;  \frac{k}{b}+1, bk+1\Big) $.

\item $|T(\bG;-1,-1)|= 2^{c(\bG)-k(\bG)+\gamma(\bG)/2}$, where $c(\bG)$ is the number of all crossing components in the medial graph of $\bG$. See~\cite[Theorem~3.1]{zbMATH06873103}.

\item The \emph{beta invariant}, $\beta(\bG)$, of a ribbon graph  $\bG$ is the coefficient of $x$ in $T(\bG;x,y)$.
\begin{enumerate}
\item 
Let $\bG$ be an orientable ribbon graph with at least two edges. Then
$\beta(\bG) \ne 0$ if and only if $\bG$ is non-separable.
Moreover if $\bG$ is non-separable, then the sign of $\beta(\bG)$ is the same as that of $(-1)^{\gamma(\bG)/2}$.  See \cite[Proposition~4.3]{Ellis_Monaghan_2022_B}.

\item 
Let $\bG$ be an orientable ribbon graph with at least two edges.  
Then the following are equivalent.
\begin{itemize}
    \item After removing isolated vertices, $\bG$ is series-parallel.
    \item $\beta(\bG)=(-1)^{\gamma(\bG)/2}$.
\end{itemize}
See~\cite[Theorem~4.9]{Ellis_Monaghan_2022_B}.
\end{enumerate}

\item
Let $\bG=(V,E)$ be an orientable ribbon graph and $T(\bG;x,y)=\sum_{i,j\geq 0} b_{i,j}x^iy^j$. Then, 
for all $k$ with $0\leq k < e(\bG)$,
    \[ \sum_{i=0}^k \sum_{j=0}^{k-i} (-1)^j \binom{k-i}j b_{i,j}=0,\]
and
\[ \sum_{i=0}^{e(\bG)} \sum_{j=0}^{e(\bG)-i} (-1)^j \binom{e(\bG)-i}j b_{i,j}=(-1)^{e(\bG)-\rho(E)}.\]
See~\cite[Theorem~5.1]{Ellis_Monaghan_2022_B}. These identities are analogous to Brylawski's coefficient identities for the classical Tutte polynomial of a graph from~\cite{zbMATH03355044}.

\item $T(\bG;x,y)$ can be recovered from both the topological transition polynomial (see~\cite[Theorem 4.2]{Ellis_Monaghan_2011}) and the Krushkal polynomial of $\bG$ (see~\cite[Lemma~4.1]{KRUSHKAL_2010} and~\cite[Theorem 5.1]{Butler_2018}). 

\item There are several connections between $T(\bG;x,y)$ and knot polynomials, as follows. (See~\cite{Huggett_2022} for an overview.)
\begin{enumerate}
\item $T(\bG;x,y)$ can be used to determine the Jones polynomial of the following types of links: 
all knots and links~\cite{zbMATH05249594};
checkerboard colourable virtual links~\cite{zbMATH05251649},
all virtual links using the dichromatic polynomial~\cite{zbMATH05507913}, 
links in real projective space~\cite{zbMATH06526330}.
\item $T(\bG;x,y)$ is determined by the \uppercase{homfly-pt} polynomial of an associated link in a thickened surface. See~\cite{Moffatt_2008}.
\item A categorification for $T(\bG;x,y)$ was given in~ \cite{zbMATH05235138}.
\end{enumerate}
\end{enumerate}

\bibliographystyle{abbrv}
\bibliography{matrix}

\begin{thebibliography}{10}

\bibitem{zbMATH01179517}
B.~Bollob\'{a}s.
\newblock {\em Modern graph theory}, volume 184 of {\em Grad. Texts Math.}
\newblock New York, NY: Springer, 1998.

\bibitem{BR01}
B.~Bollob{\'a}s and O.~Riordan.
\newblock A polynomial invariant of graphs on orientable surfaces.
\newblock {\em Proc. Lond. Math. Soc. (3)}, 83(3):513--531, 2001.

\bibitem{BR02}
B.~Bollob{\'a}s and O.~Riordan.
\newblock A polynomial of graphs on surfaces.
\newblock {\em Math. Ann.}, 323(1):81--96, 2002.

\bibitem{zbMATH05202336}
J.~A. Bondy and U.~S.~R. Murty.
\newblock {\em Graph theory}, volume 244 of {\em Grad. Texts Math.}
\newblock Berlin: Springer, 2008.

\bibitem{zbMATH07680519}
J.~E. Bonin and A.~De~Mier.
\newblock Tutte uniqueness and {Tutte} equivalence.
\newblock In {\em Handbook of the Tutte polynomial and related topics}, pages
  100--138. Boca Raton, FL: CRC Press, 2022.

\bibitem{zbMATH03355044}
T.~Brylawski.
\newblock A decomposition for combinatorial geometries.
\newblock {\em Trans. Am. Math. Soc.}, 171:235--282, 1972.

\bibitem{Brylawski_2010}
T.~Brylawski.
\newblock The {Tutte} polynomial. {I}: {General} theory.
\newblock In {\em Matroid theory and its applications. Lectures given at a
  summer school of the Centro Internazionale Matematico Estivo (C.I.M.E.), held
  in Varenna, Italy, August 24 -- September 2, 1980}, pages 125--275. Springer,
  Berlin, reprint of the 1982 original published by {Liguori}, {Napoli} and
  {Birkh{\"a}user} edition, 2010.

\bibitem{Butler_2018}
C.~Butler.
\newblock A quasi-tree expansion of the {Krushkal} polynomial.
\newblock {\em Adv. Appl. Math.}, 94:3--22, 2018.

\bibitem{zbMATH05960754}
A.~Champanerkar, I.~Kofman, and N.~Stoltzfus.
\newblock Quasi-tree expansion for the {Bollob\'as}-{Riordan}-{{T}utte}
  polynomial.
\newblock {\em Bull. Lond. Math. Soc.}, 43(5):972--984, 2011.

\bibitem{zbMATH05569114}
S.~Chmutov.
\newblock Generalized duality for graphs on surfaces and the signed
  {Bollob\'as}-{Riordan} polynomial.
\newblock {\em J. Comb. Theory, Ser. B}, 99(3):617--638, 2009.

\bibitem{Chmutov_2022}
S.~Chmutov.
\newblock Topological extensions of the {T}utte polynomial.
\newblock In {\em Handbook of the {T}utte Polynomial and Related Topics}, pages
  497--513. Chapman and Hall/CRC, May 2022.

\bibitem{zbMATH05251649}
S.~Chmutov and I.~Pak.
\newblock The {Kauffman} bracket of virtual links and the
  {Bollob{\'a}s}-{Riordan} polynomial.
\newblock {\em Mosc. Math. J.}, 7(3):409--418, 2007.

\bibitem{zbMATH05507913}
S.~Chmutov and J.~Voltz.
\newblock Thistlethwaite's theorem for virtual links.
\newblock {\em J. Knot Theory Ramifications}, 17(10):1189--1198, 2008.

\bibitem{CMNR-JCT}
C.~Chun, I.~Moffatt, S.~D. Noble, and R.~Rueckriemen.
\newblock Matroids, delta-matroids and embedded graphs.
\newblock {\em J. Combin. Theory Ser. A}, 167:7--59, 2019.

\bibitem{zbMATH05249594}
O.~T. Dasbach, D.~Futer, E.~Kalfagianni, X.-S. Lin, and N.~W. Stoltzfus.
\newblock The {Jones} polynomial and graphs on surfaces.
\newblock {\em J. Comb. Theory, Ser. B}, 98(2):384--399, 2008.

\bibitem{zbMATH06873103}
O.~T. Dasbach and H.~M. Russell.
\newblock Equivalence of edge bicolored graphs on surfaces.
\newblock {\em Electron. J. Comb.}, 25(1):research paper p1.59, 15, 2018.

\bibitem{dewey}
E.~Dewey.
\newblock A quasitree expansion of the {B}ollob\'as--{R}iordan polynomial.
\newblock preprint.

\bibitem{ELLIS_MONAGHAN_2014}
J.~Ellis-Monaghan and I.~Moffatt.
\newblock Evaluations of topological {Tutte} polynomials.
\newblock {\em Comb. Probab. Comput.}, 24(3):556--583, 2015.

\bibitem{Ellis_Monaghan_2022_B}
J.~A. Ellis-Monaghan, A.~J. Goodall, I.~Moffatt, S.~D. Noble, and L.~Vena.
\newblock Irreducibility of the {T}utte polynomial of an embedded graph.
\newblock {\em Algebraic Combinatorics}, 5(6):1337--1351, dec 2022.

\bibitem{Ellis_Monaghan_2013}
J.~A. Ellis-Monaghan and I.~Moffatt.
\newblock {\em Graphs on Surfaces}.
\newblock Springer New York, 2013.

\bibitem{zbMATH07553843}
J.~A. Ellis-Monaghan and I.~Moffatt, editors.
\newblock {\em Handbook of the {{T}utte} polynomial and related topics}.
\newblock Boca Raton, FL: CRC Press, 2022.

\bibitem{Ellis_Monaghan_2022_C}
J.~A. Ellis-Monaghan and I.~Moffatt.
\newblock The {T}utte polynomial for graphs.
\newblock In {\em Handbook of the {T}utte Polynomial and Related Topics}, pages
  14--26. Chapman and Hall/CRC, May 2022.

\bibitem{Ellis_Monaghan_2011}
J.~A. Ellis-Monaghan and I.~Sarmiento.
\newblock A recipe theorem for the topological {Tutte} polynomial of
  {Bollob{\'a}s} and {Riordan}.
\newblock {\em Eur. J. Comb.}, 32(6):782--794, 2011.

\bibitem{GOODALL_2018}
A.~Goodall, T.~Krajewski, G.~Regts, and L.~Vena.
\newblock A {Tutte} polynomial for maps.
\newblock {\em Comb. Probab. Comput.}, 27(6):913--945, 2018.

\bibitem{Goodall_2020}
A.~Goodall, B.~Litjens, G.~Regts, and L.~Vena.
\newblock A {Tutte} polynomial for maps. {II}: {The} non-orientable case.
\newblock {\em Eur. J. Comb.}, 86:32, 2020.
\newblock Id/No 103095.

\bibitem{zbMATH04006288}
J.~L. Gross and T.~W. Tucker.
\newblock {\em Topological graph theory}.
\newblock Wiley-Intersci. Ser. Discrete Math. Optim. John Wiley \& Sons,
  Hoboken, NJ, 1987.

\bibitem{Huggett_2022}
S.~Huggett.
\newblock The {Tutte} polynomial and knot theory.
\newblock In {\em Handbook of the Tutte polynomial and related topics}, pages
  352--367. Boca Raton, FL: CRC Press, 2022.

\bibitem{HMsplit}
S.~Huggett and I.~Moffatt.
\newblock A negami-type splitting formula for the jones and ribbon graph
  polynomials.
\newblock preprint.

\bibitem{zbMATH05999807}
S.~Huggett and I.~Moffatt.
\newblock Expansions for the {Bollob{\'a}s}-{Riordan} polynomial of separable
  ribbon graphs.
\newblock {\em Ann. Comb.}, 15(4):675--706, 2011.

\bibitem{HUGGETT_2019}
S.~Huggett and I.~Moffatt.
\newblock Types of embedded graphs and their {Tutte} polynomials.
\newblock {\em Math. Proc. Camb. Philos. Soc.}, 169(2):255--297, 2020.

\bibitem{KP03}
M.~Korn and I.~Pak.
\newblock Combinatorial evaluations of the {T}utte polynomial.
\newblock Preprint available from
  \texttt{https://www.math.ucla.edu/~pak/papers/{T}utte7color.pdf}, 2003.

\bibitem{Krajewski_2018}
T.~Krajewski, I.~Moffatt, and A.~Tanasa.
\newblock Hopf algebras and {Tutte} polynomials.
\newblock {\em Adv. Appl. Math.}, 95:271--330, 2018.

\bibitem{KRUSHKAL_2010}
V.~Krushkal.
\newblock Graphs, links, and duality on surfaces.
\newblock {\em Comb. Probab. Comput.}, 20(2):267--287, 2011.

\bibitem{Vergnas_1980}
M.~Las~Vergnas.
\newblock On the {Tutte} polynomial of a morphism of matroids.
\newblock {\em Ann. {Discrete} {Math}}, 8:7--20, 1980.

\bibitem{zbMATH05235138}
M.~Loebl and I.~Moffatt.
\newblock The chromatic polynomial of fatgraphs and its categorification.
\newblock {\em Adv. Math.}, 217(4):1558--1587, 2008.

\bibitem{zbMATH03716424}
W.~S. Massey.
\newblock {\em Algebraic topology: {An} introduction}, volume~56 of {\em Grad.
  Texts Math.}
\newblock Springer, Cham, 1981.

\bibitem{Moffatt_2008}
I.~Moffatt.
\newblock Knot invariants and the {Bollob{\'a}s}-{Riordan} polynomial of
  embedded graphs.
\newblock {\em Eur. J. Comb.}, 29(1):95--107, 2008.

\bibitem{Moffatt_2010_PDBR}
I.~Moffatt.
\newblock Partial duality and {Bollob{\'a}s} and {Riordan}'s ribbon graph
  polynomial.
\newblock {\em Discrete Math.}, 310(1):174--183, 2010.

\bibitem{Moffatt_2012}
I.~Moffatt.
\newblock Partial duals of plane graphs, separability and the graphs of knots.
\newblock {\em Algebr. Geom. Topol.}, 12(2):1099--1136, 2012.

\bibitem{zbMATH07336895}
I.~Moffatt and J.~Oh.
\newblock A 2-isomorphism theorem for delta-matroids.
\newblock {\em Adv. Appl. Math.}, 126:14, 2021.
\newblock Id/No 102133.

\bibitem{zbMATH06526330}
I.~Moffatt and J.~Str{\"o}mberg.
\newblock On the ribbon graphs of links in real projective space.
\newblock {\em Involve}, 9(1):133--153, 2016.

\bibitem{maya}
I.~Moffatt and M.~Thompson.
\newblock Deletion-contraction and the surface {T}utte polynomial.
\newblock {\em Eur. J. Comb.}, 118:103933, 2024.

\bibitem{Morse_2019}
A.~Morse.
\newblock Interlacement and activities in delta-matroids.
\newblock {\em Eur. J. Comb.}, 78:13--27, 2019.

\bibitem{Negami_1987}
S.~Negami.
\newblock Polynomial invariants of graphs.
\newblock {\em Trans. Am. Math. Soc.}, 299(2):601--622, 1987.

\bibitem{NegamiEmb}
S.~Negami.
\newblock Polynomial invariants of embeddings of graphs on closed surfaces.
\newblock {\em Science Reports of the Yokohama National University Section I:
  Mathematics, Physics and Chemistry}, 42:1--10, 1995.

\bibitem{Vignes_Tourneret_2009}
F.~Vignes-Tourneret.
\newblock The multivariate signed {Bollob{\'a}s}-{Riordan} polynomial.
\newblock {\em Discrete Math.}, 309(20):5968--5981, 2009.

\bibitem{Vignes_Tourneret_2011}
F.~Vignes-Tourneret.
\newblock Non-orientable quasi-trees for the {Bollob{\'a}s}-{Riordan}
  polynomial.
\newblock {\em Eur. J. Comb.}, 32(4):510--532, 2011.

\end{thebibliography}

\end{document}